
\documentclass[12pt, reqno]{amsart}%
\usepackage{amsmath, amsthm, amscd, amsfonts, amssymb, graphicx, color}
\usepackage[bookmarksnumbered, colorlinks, plainpages]{hyperref}
\usepackage{amsmath}
\usepackage{amsfonts}
\usepackage{amssymb}
\usepackage{graphicx}%
\usepackage{multicol}
\setcounter{MaxMatrixCols}{30}
\providecommand{\U}[1]{\protect\rule{.1in}{.1in}}
\makeatletter
\@namedef{subjclassname@2020}{	\textup{2020} Mathematics Subject Classification}
\makeatother
\textheight 22.5truecm \textwidth 14.5truecm
\setlength{\oddsidemargin}{0.35in}\setlength{\evensidemargin}{0.35in}
\setlength{\topmargin}{-.5cm}
\newtheorem{theorem}{Theorem}[section]

\newtheorem{proposition}[theorem]{Proposition}
\newtheorem{corollary}[theorem]{Corollary}
\theoremstyle{definition}
\newtheorem{definition}[theorem]{Definition}

\newtheorem{remark}[theorem]{Remark}
\numberwithin{equation}{section}

\newcommand{\be}{\begin{equation}}
\newcommand{\ee}{\end{equation}}
\begin{document}
\title[On Some Intersection Properties of Finite Groups]{On Some Intersection Properties of Finite Groups}
\author[Das]{Angsuman Das$^{\flat}$}
\email{\textcolor[rgb]{0.00,0.00,0.84}{angsuman.maths@presiuniv.ac.in}}
\address{Department of Mathematics\\ Presidency University, Kolkata, India} 

\author[Mandal]{Arnab Mandal}
\email{\textcolor[rgb]{0.00,0.00,0.84}{arnab.maths@presiuniv.ac.in}}
\address{Department of Mathematics\\ Presidency University, Kolkata, India} 

\subjclass[2020]{20D25, 20E34}
\keywords{perfect group, quaternion group}
\thanks{$^\flat$Corresponding author}

\begin{abstract}
In this article, we introduce the study of a class of finite groups $G$ which admits a subgroup which intersects all non-trivial subgroups of $G$.  We also explore a subclass of it consisting of all groups $G$ in which the prime order elements commute. In particular, we discuss the relationship between these class of groups with other known classes of finite groups, like simple groups, perfect groups etc. Moreover, we also prove some results on the possible orders of such groups. Finally, we conclude with some open issues.
\end{abstract}
\maketitle
\setcounter{page}{1}

\section{Introduction}
Various interesting results in group theory like Lagrange's theorem, Sylow theorems, Feit-Thompson theorem etc. hold only for finite groups, i.e., there is a close relationship between finite combinatorics and finite group theory. This even gives rise to various peculiar finite groups like Heisenberg groups modulo an odd prime $p$, Quaternion groups, Elliptic curve groups over finite fields etc. In this article, we study and generalize some remarkable properties of one such group, namely the generalized Quaternion group $Q_{2^n}=\langle a,b: \circ(a)=2^{n-1}; a^{2^{n-2}}=b^2; ba=a^{-1}b \rangle$, $n\geq 3$.  It is known that any non-abelian $2$-group with a unique subgroup of order $2$ is isomorphic to $Q_{2^n}$ (See Theorem 5.4.10.ii, p.199 \cite{gorenstein}). Moreover, this unique subgroup has a property that any other non-trivial subgroup of $Q_{2^n}$ intersects it non-trivially. We generalize this property for any arbitrary group $G$.

\begin{definition}
    A group $G$ is said to have subgroup intersection property or $SIP$ if there exists a proper subgroup $H$ of $G$ which intersects all non-trivial subgroups $X$ of $G$ non-trivially, i.e., $|X\cap H|>1$.
\end{definition}

We also define a subclass of $SIP$ groups as follows:

\begin{definition}
    A group $G$ is said to have strong subgroup intersection property or $SSIP$ if there exists a unique proper subgroup $H$ of $G$ which intersects all non-trivial subgroups of $G$ non-trivially.
\end{definition}

The class of $SSIP$ groups is a non-empty proper subclass of $SIP$ groups as $Q_{2^n}$ is a $SIP$ group which is not $SSIP$, and for any odd prime $p$, $\mathbb{Z}_{p^2}\rtimes \mathbb{Z}_p$ is an example of $SSIP$ group.

On the other hand, it is known that if $G$ is a finite group such that all elements of prime power order commute, then $G$ is abelian. So, what about the finite groups in which all prime order elements commute? Are they necessarily commutative? The answer is negative and $Q_{2^n}$ serves as a family of counterexamples, as it has a unique element of order $2$. This motivates the definition of another class of groups.

\begin{definition}
    A group $G$ is said to have $POEC$ property if all the elements of prime order in $G$ commute.
\end{definition}

As abelian groups are always $POEC$, we consider only non-abelian $POEC$ groups. As we will show later, the class of non-abelian $POEC$ groups also form a subclass of $SIP$ groups. The inter-relationship between the classes of $SIP$, $SSIP$ and $POEC$ groups, as shown in Figure \ref{picture}, is the main topic of discussion of the current article. Although the definitions of these classes allow the group to be infinite, in what follows, we assume $G$ to be a finite group.

\begin{figure}[ht]
	\centering
	\begin{center}
        \includegraphics[scale=0.5]{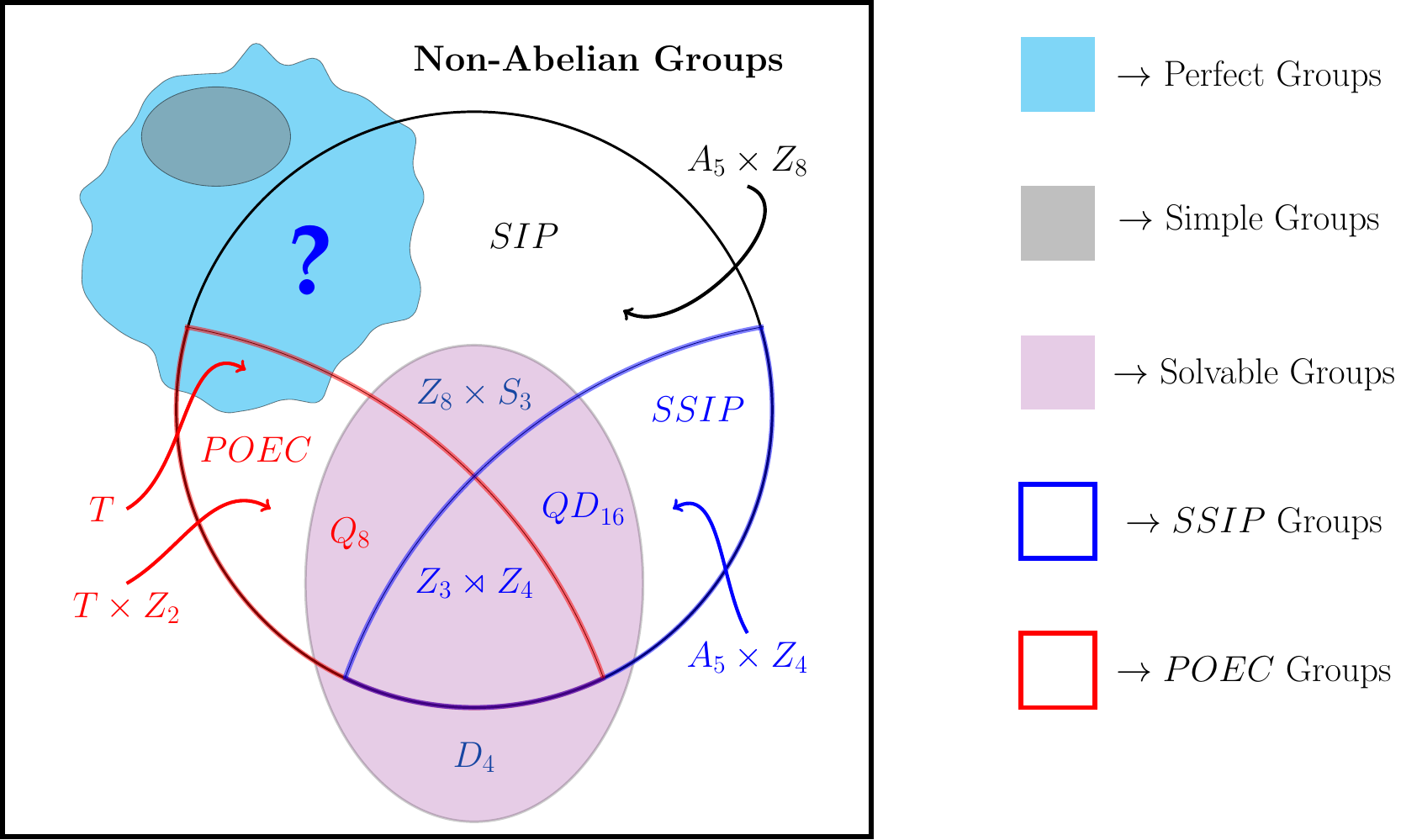}
		\caption{Inter-relationship between various types of finite non-abelian groups}
		\label{picture}
	\end{center}
\end{figure}

\subsection{Preliminaries and Basic Results}
Before going to elaborate results, we define some terminologies and their relations with each other. In this paper, by $S_p$, we mean Sylow $p$-subgroup of the group in context, and not the symmetric group on $p$ symbols. 
\begin{definition}
    Let $G$ be a finite group, $\pi(G)$ be the set of primes dividing $|G|$ and $p\in \pi(G)$. Define $P[G]$ to be the subgroup generated by all elements of prime order in $G$, i.e., $$P[G]=\langle \{x \in G: \circ(x) \mbox{ is prime}\} \rangle.$$
    Also define $G_p$ be the subgroup generated by all elements of order $p$ in $G$, i.e., $$G_p=\langle \{x \in G: \circ(x)=p\} \rangle.$$
\end{definition}

Clearly, $G_p \leq P[G]$ and both are characteristic subgroups of $G$ and hence normal in $G$. Moreover, $P[G]$ intersects all non-trivial subgroups of $G$ non-trivially, i.e., if $X$ is a non-trivial subgroup of $G$, then $|X\cap P[G]|>1$.  One can easily observe that a finite group $G$ is $SIP$ if and only if $P[G]$ is a proper subgroup of $G$. It is to be noted that for an arbitrary group $G$, $P[G]$ and $G_p$ may be equal to the entire group $G$.

It can be shown that if $G$ is a non-abelian $POEC$ group, then $P[G]$ is a proper abelian subgroup of $G$ and hence $G$ is not simple.  This also shows that a non-abelian $POEC$ group is also a $SIP$ group.

We now recall a few well known theorems on finite groups which will be used in what follows.

\begin{theorem}[Schur-Zassenhaus Theorem]
    Any normal Hall subgroup $K$ of a finite group $G$ possesses a complement, that is, there is some subgroup $L$ of $G$ such that $KL = G$ and $K\cap L=\{e\}$ (so $G$ is a semidirect product of $K$ and $L$).
\end{theorem}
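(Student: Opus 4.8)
The plan is to argue by induction on $|G|$, reducing the general statement to the case in which $K$ is abelian, and then to dispatch the abelian case by an explicit averaging argument on factor sets. Observe first that, because $\gcd(|K|,[G:K])=1$, a complement to $K$ is exactly the same thing as a subgroup of order $n:=[G:K]$: if $|L|=n$ then $|L\cap K|$ divides $\gcd(|L|,|K|)=1$, whence $L\cap K=\{e\}$ and $LK=G$ by counting. So it suffices to produce a subgroup of order $n$.

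For the inductive step, assume $K\neq\{e\}$, fix a prime $p\mid|K|$ and a Sylow $p$-subgroup $P$ of $K$; since $K$ is a Hall subgroup, $P$ is also a Sylow $p$-subgroup of $G$. By the Frattini argument $G=K\,N_G(P)$. If $N_G(P)$ is a proper subgroup of $G$, then $N_G(P)\cap K$ is a normal Hall subgroup of $N_G(P)$ of index $[KN_G(P):K]=[G:K]=n$, so by induction $N_G(P)$ has a subgroup of order $n$ and we are done. If instead $N_G(P)=G$, then $P\trianglelefteq G$; when $P\neq K$ we first pass to $G/P$, obtaining by induction a subgroup $M/P$ of $G/P$ of order $n$ with $M$ a proper subgroup of $G$ (since $|M|=|P|\,n<|K|\,n=|G|$), and then apply induction to $M$ with its normal Hall subgroup $P$ to get a subgroup of order $n$ inside $M$. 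When $P=K$, the group $K$ is a $p$-group; replacing $P$ by the (nontrivial, characteristic, hence normal in $G$) centre $Z(K)$ and running the same two-step reduction, we are left only with the case $Z(K)=K$, i.e.\ $K$ abelian.

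It remains to treat $K$ abelian. Choose a set map $s\colon Q\to G$, where $Q:=G/K$, with $s(x)K=x$ and $s(1)=e$, and set $f(x,y)=s(x)s(y)s(xy)^{-1}\in K$. Since $K$ is abelian, conjugation defines an action of $Q$ on $K$, and associativity in $G$ gives the cocycle identity $f(x,y)\,f(xy,z)={}^{x}\!f(y,z)\cdot f(x,yz)$. Define $c(x)=\prod_{y\in Q}f(x,y)$; multiplying the cocycle identity over all $z\in Q$ and reindexing the factor $\prod_{z}f(x,yz)$ yields $f(x,y)^{n}={}^{x}c(y)\cdot c(x)\cdot c(xy)^{-1}$. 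Pick $n'$ with $nn'\equiv1\pmod{|K|}$ and put $b(x)=c(x)^{n'}$; then $f(x,y)={}^{x}b(y)\cdot b(x)\cdot b(xy)^{-1}$, so the modified transversal $t(x)=b(x)^{-1}s(x)$ satisfies $t(x)t(y)=t(xy)$. Hence $t$ is a homomorphism, $L=t(Q)$ is a subgroup of order $n$, and we are done.

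I expect the main obstacle to be the organization of the induction rather than any single computation: one has to arrange that each reduction strictly decreases the order while keeping the Hall (coprimality) hypothesis intact, which is precisely why the Frattini argument and the passage to centres of $p$-subgroups are needed to funnel the problem down to the abelian case. The abelian case itself is then a self-contained calculation, whose only delicate points are the bookkeeping in the averaging step and the use of coprimality to invert $n$ modulo $|K|$.
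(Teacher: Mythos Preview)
The paper does not prove the Schur--Zassenhaus theorem at all; it is merely quoted in the preliminaries as a known background result and then used later (e.g.\ in the proof of Proposition~\ref{bunch}(3)). So there is no proof in the paper to compare against.

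That said, your argument is correct and is essentially the standard textbook proof: induction on $|G|$, Frattini argument to reduce to the case of a normal Sylow $p$-subgroup, a two-step quotient reduction to reach the abelian case, and then the averaging trick on the factor set to kill the $2$-cocycle. The bookkeeping in the cocycle step is right (multiplying over $z$ and reindexing gives $f(x,y)^{n}={}^{x}c(y)\,c(x)\,c(xy)^{-1}$, and coprimality lets you take an $n'$-th power to solve for $f$). One small point you might make explicit: in the inductive step with $N_G(P)\neq G$, you should note that $N_G(P)\cap K$ is indeed a Hall subgroup of $N_G(P)$ because its index equals $[G:K]=n$, which is coprime to $|K|$ and hence to $|N_G(P)\cap K|$; you use this implicitly but it is the point that makes the induction hypothesis apply.
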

    
\begin{proposition}[Theorem 10.1.4 \cite{robinson-book}]\label{key-result}
    If $G$ is a finite perfect group and $[G:Z(G)]=t$, then $g^t=e$ for all $g \in G$.
\end{proposition}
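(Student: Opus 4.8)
The plan is to invoke the transfer homomorphism (Verlagerung) associated to the central subgroup $Z := Z(G)$. Since $[G:Z] = t$ is finite and $Z$ is abelian (so $[Z,Z] = \{e\}$), the transfer furnishes a group homomorphism $V \colon G \to Z$.

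First I would recall the construction and basic properties of $V$: fixing a right transversal $x_1,\dots,x_t$ of $Z$ in $G$, and writing, for $g \in G$, $x_i g = z_i x_{\pi(i)}$ with $z_i \in Z$ and $\pi$ a permutation of $\{1,\dots,t\}$, one sets $V(g) = \prod_{i=1}^t z_i$; this product is independent of the chosen transversal and $V$ is a homomorphism. I would then use the standard orbit-decomposition evaluation of $V$: letting $\langle g \rangle$ act by right multiplication on the set of cosets $\{Z x_i\}$, with orbit representatives $y_1,\dots,y_k$ and corresponding orbit lengths $\ell_1,\dots,\ell_k$ (so that $\ell_1 + \cdots + \ell_k = t$), one has $V(g) = \prod_{j=1}^k y_j\, g^{\ell_j}\, y_j^{-1}$.

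Next I would specialize to the fact that $Z$ is central. From $Z y_j g^{\ell_j} = Z y_j$ we get $y_j g^{\ell_j} y_j^{-1} \in Z$, and since $Z \trianglelefteq G$ this yields $g^{\ell_j} \in Z = Z(G)$; hence conjugation by $y_j$ fixes $g^{\ell_j}$, so $y_j g^{\ell_j} y_j^{-1} = g^{\ell_j}$. Substituting into the evaluation formula gives $V(g) = \prod_{j=1}^k g^{\ell_j} = g^{\,\ell_1 + \cdots + \ell_k} = g^{\,t}$ for every $g \in G$.

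Finally, because $G$ is perfect we have $G = [G,G]$, so every homomorphism from $G$ into an abelian group is trivial; in particular $V$ is the trivial map, and therefore $g^{\,t} = V(g) = e$ for all $g \in G$. I expect the only real work to lie in the transfer machinery itself — verifying that $V$ is well defined (independent of the transversal) and is a homomorphism, and establishing the orbit-decomposition formula for its values; once those are in hand, the reduction to the central case and the appeal to perfectness are immediate.
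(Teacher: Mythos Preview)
The paper does not supply its own proof of this proposition; it is stated as a quotation of Theorem~10.1.4 in Robinson's book and used as a black box. Your argument via the transfer $V\colon G \to Z(G)$, the orbit evaluation $V(g)=\prod_j y_j g^{\ell_j} y_j^{-1}$, the observation that $g^{\ell_j}\in Z(G)$ forces each conjugate to equal $g^{\ell_j}$, and the conclusion $V(g)=g^t$ together with perfectness killing $V$, is correct and is precisely the standard proof (indeed the one Robinson gives in Chapter~10). There is nothing to compare: your proposal matches the cited source.
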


\begin{proposition}[Theorem 6.17 \cite{suzuki-book}]\label{sl-perfect}
    Let $p$ be an odd prime. The only non-trivial perfect subgroups of $SL(2,p)$ are: \begin{itemize}
        \item $SL(2,p)$ itself, and 
        \item $SL(2,5)\cong 2.A_5$.
    \end{itemize}
    The second case occurs only if $p\equiv \pm 1~(mod~10)$.
\end{proposition}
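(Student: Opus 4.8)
The plan is to deduce the statement from Dickson's classification of the subgroups of $PSL(2,p)$ by transporting information along the central quotient map $\rho\colon SL(2,p)\to PSL(2,p)=SL(2,p)/\{\pm I\}$. So let $H\le SL(2,p)$ be a non-trivial perfect subgroup and set $\bar H=\rho(H)$; since a homomorphic image of a perfect group is perfect, $\bar H$ is a perfect subgroup of $PSL(2,p)$.

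First I would list the perfect subgroups of $PSL(2,p)$. Running down Dickson's list --- cyclic groups, dihedral groups, groups of the form $\mathbb{Z}_p\rtimes\mathbb{Z}_t$, $A_4$, $S_4$, $A_5$, and $PSL(2,p)$ itself (there are no proper subfield subgroups, as $p$ is prime) --- every entry is either solvable, or $A_5$, or $PSL(2,p)$. Hence the only perfect subgroups of $PSL(2,p)$ are the trivial group, $A_5$ (which, by the same classification, occurs as a subgroup precisely when $p=5$ or $p\equiv\pm1\pmod{10}$), and $PSL(2,p)$ itself (which is perfect precisely when $p\ge 5$).

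Then I would run through the three possibilities for $\bar H$. If $\bar H=1$ then $H\le\{\pm I\}$ is abelian and perfect, so $H=1$, which is excluded. If $\bar H=PSL(2,p)$ (which forces $p\ge 5$), then $SL(2,p)=H\{\pm I\}$; since $\{\pm I\}$ is central, $[SL(2,p),SL(2,p)]=[H\{\pm I\},H\{\pm I\}]=[H,H]=H$, and as $SL(2,p)$ is perfect for $p\ge5$ the left-hand side equals $SL(2,p)$, so $H=SL(2,p)$. If $\bar H\cong A_5$, then $\rho|_H$ has kernel $H\cap\{\pm I\}$, which is $1$ or $\{\pm I\}$. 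It cannot be trivial: otherwise $H\cong A_5$ would embed in $SL(2,p)$, but for $p$ odd $SL(2,p)$ has a \emph{unique} involution --- a matrix $A$ with $A^2=I$ and $A\ne\pm I$ is diagonalizable with eigenvalues $1,-1$ and hence has determinant $-1$ --- whereas $A_5$ contains a Klein four-group and so has three involutions. Therefore $H\cap\{\pm I\}=\{\pm I\}$, making $H$ a perfect central extension of $A_5$ by $\mathbb{Z}_2$ of order $120$; since the Schur multiplier of $A_5$ is $\mathbb{Z}_2$, the universal perfect central extension of $A_5$ is $2.A_5\cong SL(2,5)$, again of order $120$, so $H\cong SL(2,5)$. (When $p=5$ this extension is all of $SL(2,5)$, so the two listed cases coincide; when $p=3$, $PSL(2,3)\cong A_4$ has no non-trivial perfect subgroup, hence neither does $SL(2,3)$.) Assembling the cases yields the proposition.

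The hard part is really the first step: Dickson's subgroup classification for $PSL(2,p)$, together with the precise congruence governing the appearance of $A_5$, is the genuine input, and it is exactly what the paper quotes from Suzuki's book. Once that is available, the remaining work is routine central-extension bookkeeping, the one small but essential observation being that $SL(2,p)$ has a single involution $-I$; this is what rules out a copy of $A_5$ and forces the preimage of $A_5$ to be the non-split double cover $SL(2,5)$ rather than $\mathbb{Z}_2\times A_5$.
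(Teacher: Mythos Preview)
The paper does not supply a proof of this proposition at all: it is quoted as a background result from Suzuki's book, so there is nothing to compare your argument against. That said, your reduction to Dickson's classification of subgroups of $PSL(2,p)$ is the standard route and is carried out correctly. The three ingredients you isolate --- that the only perfect entries on Dickson's list are $A_5$ and $PSL(2,p)$ itself, that $-I$ is the unique involution of $SL(2,p)$ for odd $p$ (so $A_5$ cannot embed and one is forced up to the double cover), and that the only perfect central $\mathbb{Z}_2$-extension of $A_5$ is $2.A_5\cong SL(2,5)$ --- are exactly what is needed, and your case analysis is clean. Your handling of the boundary cases $p=3$ (no nontrivial perfect subgroups) and $p=5$ (the two bullets coincide) is also correct, as is the observation that for odd $p$ the condition $p\equiv\pm1\pmod 5$ coming out of Dickson is the same as $p\equiv\pm1\pmod{10}$.
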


\subsection{Organisation of the paper}
As discussed earlier, for finite non-abelian groups, both $POEC$ and $SSIP$ groups are sub-classes of $SIP$ groups. Since $POEC$ group itself forms an important class of groups, we start with $POEC$ groups in Section \ref{poec-section}, followed by $SIP$ and $SSIP$ groups in Sections \ref{sip-section} and \ref{ssip-section} respectively. Finally, we conclude with some open issues in Section \ref{conclusion}. 

\section{POEC Groups}\label{poec-section}
Let $G$ be a non-abelian $POEC$ group. Then all the elements of $P[G]$ are of square free order. Thus, if $|G|=p^{\alpha_1}_1p^{\alpha_2}_2\cdots p^{\alpha_k}_k$, then $$P[G]\cong G_{p_1}\times G_{p_2}\times \cdots \times G_{p_k} \cong \mathbb{Z}^{\beta_1}_{p_1}\times \mathbb{Z}^{\beta_2}_{p_2}\times \cdot \times \mathbb{Z}^{\beta_k}_{p_k},$$ where $\mathbb{Z}^{\beta_i}_{p_i}$ is the direct product of $\beta_i$ copies of $\mathbb{Z}_{p_i}$, $\beta_i\leq \alpha_i$ for $i=1,2\ldots,k$ and not all $\alpha_i=\beta_i$. Note that for $POEC$ groups, $$G_p=\langle \{x \in G: \circ(x)=p\} \rangle = \{x \in G: \circ(x)=p\}.$$

\begin{theorem}
Let $G$ be a finite group. $G$ is a $POEC$ group if and only if all elements of square-free order forms an abelian subgroup of $G$.
\end{theorem}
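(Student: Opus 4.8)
The plan is to show that in an arbitrary finite group the set $T=\{x\in G:\circ(x)\text{ is square-free}\}$ coincides, as soon as $G$ is $POEC$, with the characteristic subgroup $P[G]$; since the discussion preceding the theorem already shows that $P[G]$ is abelian in this situation, both implications follow at once. The reverse implication ($\Leftarrow$) needs almost nothing: if the elements of square-free order form an abelian subgroup $S$, then every element of prime order lies in $S$ (a prime is square-free), and two elements of an abelian group commute, so $G$ is $POEC$.

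For the forward implication ($\Rightarrow$), assume $G$ is $POEC$. First I would check $T\subseteq P[G]$: given $x\in T$ with $\circ(x)=q_1q_2\cdots q_m$ for distinct primes $q_i$, the primary (CRT) decomposition of the cyclic group $\langle x\rangle$ writes $x=x_1x_2\cdots x_m$ with each $x_i$ a suitable power of $x$ of order exactly $q_i$; each $x_i$ is then a generator of $P[G]$, so $x\in P[G]$. For the opposite inclusion $P[G]\subseteq T$, I would invoke the structural description recorded just before the theorem: that argument uses only that prime-order elements pairwise commute (so it applies equally in the abelian case), and it gives $P[G]\cong \mathbb{Z}_{p_1}^{\beta_1}\times\cdots\times\mathbb{Z}_{p_k}^{\beta_k}$, whence every element of $P[G]$ has square-free order, i.e.\ lies in $T$. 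Therefore $T=P[G]$, which in particular is a subgroup; and it is abelian since its generating set, namely the prime-order elements of $G$, is pairwise commuting by the $POEC$ hypothesis.

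The only routine computations are the primary decomposition of $\langle x\rangle$ and the verification that $P[G]=G_{p_1}\times\cdots\times G_{p_k}$ with each factor elementary abelian; I do not expect a genuine obstacle. The single point that deserves a little care is that $T$ need not be closed under multiplication in a general group, so the content of the forward direction is precisely that the $POEC$ hypothesis forces $T$ to be exactly the subgroup $P[G]$ — and that this one identification is what simultaneously delivers both ``subgroup'' and ``abelian'' in the statement.
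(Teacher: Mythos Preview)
Your proposal is correct and follows essentially the same approach as the paper: you identify the set $T$ of square-free-order elements with $P[G]$ via the CRT decomposition and the structural description $P[G]\cong\prod G_{p_i}$ recorded just before the theorem, which is exactly what the paper's one-line appeal to ``the above discussion'' is doing, and your converse is identical to the paper's. Your only addition is the explicit remark that the structural description of $P[G]$ does not actually require $G$ to be non-abelian, which is a fair observation since the paper's preceding paragraph opens with that (unnecessary) hypothesis.
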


\begin{proof}
If $G$ is a $POEC$ group, then the theorem holds using the above discussion. Conversely, let $H$ be set of all elements of square-free order which forms an abelian subgroup of $G$. Clearly, all elements of prime order are in $H$ and $H$ is abelian. Thus $G$ is a $POEC$ group. 
\end{proof}

\begin{proposition}\label{bunch}
    Let $G$ be a finite $POEC$ group. Then the following are true:
    \begin{enumerate}
        \item(POEC is subgroup-closed and direct-product closed) If $H\leq G$, then $H$ is a $POEC$ group. If $G_1$ and $G_2$ are two $POEC$ groups, then $G_1\times G_2$ is also a $POEC$ group.
        \item(POEC is not quotient-closed) If $H\lhd G$, then $G/H$ may not be a $POEC$ group.
        \item If $G$ has a normal Sylow-$p$-subgroup $P$, then $G/P$ is a $POEC$ group. (the statement is also true if we replace Sylow subgroup by a Hall subgroup)
        \item If $G$ is non-abelian, then $|G|$ is not square-free.
        \item A positive integer is called {\it almost square-free} if it is divisible by $p^2$ for at most one prime $p$. If $|G|$ is almost square-free, then $G$ is super-solvable.
        \item If $|G|$ is divisible by square of at most two distinct primes, $G$ is solvable.
        \item If $8$ does not divide $|G|$, $G$ is solvable.
        \item If $G$ is not solvable, then there exists two odd primes $p,q$ such that $8p^2q^2$ divides $|G|$.
        \item If $G$ is perfect, then $8p^2q^2$ divides $|G|$.
    \end{enumerate}
\end{proposition}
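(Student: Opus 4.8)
The nine assertions split into a few groups, and one would establish them roughly in the stated order. The closure facts (1)--(3) come first and are soft. For (1), subgroup-closure is immediate since a prime-order element of $H\le G$ is a prime-order element of $G$; and in $G_1\times G_2$ an element of prime order $p$ has each coordinate of order $1$ or $p$, hence each coordinate is the identity or a prime-order element of its own factor, so two such elements of $G_1\times G_2$ commute coordinatewise. For (2) it is enough to exhibit one bad quotient: take $G=Q_{16}$, which is $POEC$ because it is a generalized quaternion group with a unique involution, and $N=Z(Q_{16})\cong\mathbb{Z}_2$; then $G/N$ is the dihedral group of order $8$, which has non-commuting involutions and so is not $POEC$. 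For (3), a normal Sylow (or, more generally, normal Hall) subgroup $K$ of $G$ has a complement $L\le G$ with $G/K\cong L$ by the Schur--Zassenhaus theorem, and $L$ is $POEC$ by (1), hence so is $G/K$.

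Everything from (4) onward rests on a single observation, which the plan is to isolate first: \emph{if a prime $q$ divides $|G|$ to the first power only, then the Sylow $q$-subgroup of $G$ is normal, has order $q$, and equals $G_q$}. Indeed $G_q\cong\mathbb{Z}_q^{\beta_q}$ is a $q$-group whose order divides $|G|$, and $\beta_q\ge1$ by Cauchy's theorem, so $\beta_q=1$; thus $G$ has exactly $q-1$ elements of order $q$, all lying in $G_q$. Since each Sylow $q$-subgroup has order $q$ and accounts for $q-1$ elements of order $q$, and distinct ones meet trivially, there is only one Sylow $q$-subgroup, and being generated by its $q-1$ elements of order $q$ it is $G_q$. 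Then (4) follows at once: if $G$ is non-abelian and $|G|$ were square-free, every prime $q\mid|G|$ would give a normal Sylow subgroup $Q_q$ of order $q$, and these have pairwise coprime orders, so they commute pairwise and $G=\prod_q Q_q$ is a direct product of cyclic groups --- abelian, a contradiction.

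For (5) the abelian case is automatic, so assume $G$ non-abelian; by (4), $|G|=p^{\alpha}q_1\cdots q_r$ with $\alpha\ge2$ and distinct primes $q_i\ne p$. Each $q_i$ yields a normal Sylow $q_i$-subgroup $Q_i$ of order $q_i$, and $N:=Q_1\cdots Q_r$ is a normal cyclic Hall $p'$-subgroup, so $G/N$ is a $p$-group. One then builds a normal series of $G$ with cyclic prime-order factors: below $N$ take $1\lhd Q_1\lhd Q_1Q_2\lhd\cdots\lhd N$, whose terms are normal in $G$ (products of normal subgroups) with successive factors $\cong Q_{i+1}$; above $N$, pull back a chief series of the $p$-group $G/N$, all of whose chief factors have order $p$. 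Hence $G$ is supersolvable. For (6): if at most one prime has square dividing $|G|$, invoke (5); otherwise exactly two primes $p,q$ do, and peeling off the normal cyclic Hall $\{p,q\}'$-subgroup $R$ (as in (5)) leaves $G/R$ of order $p^{\alpha}q^{\beta}$, solvable by Burnside's $p^aq^b$ theorem, so $G$ is solvable. For (7), split on the $2$-part of $|G|$: if $|G|$ is odd, apply Feit--Thompson; otherwise the Sylow $2$-subgroup $S_2$ has order $2$ or $4$. If $S_2$ is cyclic, then $\mathrm{Aut}(S_2)$ is a $2$-group, so $N_G(S_2)=C_G(S_2)$ and Burnside's normal $p$-complement theorem gives a normal $2$-complement of odd, hence (Feit--Thompson) solvable, order, so $G$ is solvable. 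The only remaining possibility is $S_2\cong\mathbb{Z}_2\times\mathbb{Z}_2$, and then $G_2$ --- elementary abelian, characteristic in $G$, containing the three involutions of $S_2$, and of order at most $4$ since $8\nmid|G|$ --- must equal $S_2$; so $S_2\lhd G$, and $G$ is solvable once more.

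Items (8) and (9) then follow immediately. If $G$ is not solvable, (7) gives $8\mid|G|$ while the contrapositive of (6) gives at least three distinct primes whose squares divide $|G|$; at least two of them, $p$ and $q$, are odd, and $\gcd(8,p^2q^2)=1$, so $8p^2q^2\mid|G|$, which is (8). A non-trivial perfect group is non-solvable, so (8) yields (9). The step I expect to be the real obstacle is the Klein-four case in (7): ruling out a $\mathbb{Z}_2\times\mathbb{Z}_2$ Sylow $2$-subgroup would in general require the classification of finite simple groups (or heavy input such as the Gorenstein--Walter theorem), and the crux is that the $POEC$ hypothesis forces $G_2$ to be a \emph{normal} Sylow $2$-subgroup precisely in that situation. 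A smaller technical point is that the supersolvability argument in (5) needs a series whose terms are normal in all of $G$, which is why one must use that $N$ is a product of the $G$-normal subgroups $Q_i$ and not merely a cyclic normal subgroup.
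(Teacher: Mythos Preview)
Your proof is correct and follows essentially the same route as the paper's, with only cosmetic differences: you use $Q_{16}/Z(Q_{16})\cong D_4$ for the counterexample in (2) instead of the paper's GAP group $(\mathbb{Z}_4\times\mathbb{Z}_2)\rtimes\mathbb{Z}_4$, your (3) is slightly slicker (you pass directly to the complement $L\cong G/K$ rather than computing with cosets), and in (7) you case-split on the isomorphism type of $S_2$ and invoke Burnside's normal $p$-complement theorem, whereas the paper simply observes that $G/G_2$ has order $m$ or $2m$ with $m$ odd and is therefore solvable. The key structural input---that for a POEC group the primes dividing $|G|$ to the first power contribute normal cyclic Sylow subgroups inside $P[G]$, so the ``square-free part'' peels off as a normal cyclic Hall subgroup---is the same in both arguments.
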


\begin{proof}
    \begin{enumerate}
        \item The proofs follow from the definition of $POEC$ groups.
        \item Consider the group $G=(\mathbb{Z}_4\times \mathbb{Z}_2)\rtimes \mathbb{Z}_4$ with GAP ID (32,2) which is defined by the following presentation:
$\langle a, b, x \mid a^4 = b^2 = x^4, ab = ba, bx = xb, xax^{-1} = ab\rangle$. It is a $POEC$ group. Now $G$ has a normal subgroup $N$ isomorphic to Klein's $4$-group, such that the quotient $G/N$ is isomorphic to $D_4$, the dihedral group of order $8$, which itself is not a $POEC$ group.
\item Let $|G|=p^{\alpha_1}_1p^{\alpha_2}_2\cdots p^{\alpha_k}_k$ and $|P|=p^{\alpha_1}_1$. As $P$ is normal in $G$, by Schur – Zassenhaus theorem, $P$ has a complement $Q$ in $G$ with $|Q|=p^{\alpha_2}_2\cdots p^{\alpha_k}_k$. As $G=PQ$, any element $g \in G$ can be expressed as $x_1x_2$ where $x_1 \in P$ and $x_2 \in Q$. Thus any element $gP \in G/P$ can also expressed as $xP$ where $x\in Q$.

Let $xP, yP \in G/P$ are prime order elements such that $x,y \in Q$. Let $\circ(xP)=p_2$ and $\circ(yP)=p_3$. Thus $x^{p_2},y^{p_3} \in P$. Also $x^{p_2},y^{p_3} \in Q$. Hence $x^{p_2}=y^{p_3}=e$, i.e., $\circ(x)=p_2$ and $\circ(y)=p_3$. As $G$ is a $POEC$ group, we have $xy=yx$, i.e., $xP\cdot yP=yP\cdot xP$. Thus $G/P$ is also a $POEC$ group.
\item If possible, let $|G|$ be square-free. Then order of all of its elements are also square-free. Hence $SQF(G)=G$. But this implies that $G$ is abelian, a contradiction. Hence the result holds.
\item Suppose, $G$ is not supersolvable. Thus $G$ is not abelian. Thus by previous result, $|G|$ is not square-free, i.e., there exists a prime $p$ such that $p^2$ divides $|G|$. If $p$ is the only such prime, i.e.,  $|G|$ is almost square-free, then $|G|=p^ap_1p_2\cdots p_k$ with $a\geq 2$ and $|SQF(G)|=p^bp_1p_2\cdots p_k$ with $b\leq a$. Consider $K$ the subgroup of $SQF(G)$ of order $p_1p_2\cdots p_k$. Clearly $K$ is cyclic and $K\lhd G$. Now, as $G/K$ is a $p$ group, it is supersolvable. Also $K$ being cyclic, we deduce that $G$ is supersolvable, a contradiction. Thus the result holds.
\item The proof is similar to the above proof.
\item If $|G|$ is $m$ or $2m$, where $m$ is odd, then $G$ is solvable. So we assume that $|G|=4m$, where $m$ is odd. Let $H_2$ be the subgroup generated by elements of order $2$ in $G$. Then $H_2$ is normal in $G$ and $|G/H_2|=m$ or $2m$. In any case, $G/H_2$ is solvable. Moreover, as $H_2$ is abelian, and hence solvable. Thus $G$ is solvable.
\item It follows from the above two results.
\item Since $G$ is perfect, it is not solvable and it follows from the above result.
    \end{enumerate}
\end{proof}


\begin{proposition}
 A nilpotent group is $POEC$ if and only if all of its Sylow subgroups are $POEC$.   
\end{proposition}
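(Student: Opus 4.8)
The plan is to reduce everything to the structure theorem for finite nilpotent groups together with the closure properties already established in Proposition \ref{bunch}(1). Recall that a finite nilpotent group $G$ is the internal direct product of its Sylow subgroups: if $|G|=p_1^{\alpha_1}\cdots p_k^{\alpha_k}$ and $S_{p_i}$ denotes the (unique, normal) Sylow $p_i$-subgroup, then $G\cong S_{p_1}\times S_{p_2}\times\cdots\times S_{p_k}$. This is the only external fact I would invoke; no new group-theoretic machinery is needed.

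For the forward implication, suppose $G$ is a nilpotent $POEC$ group. Each Sylow subgroup $S_{p_i}$ is in particular a subgroup of $G$, so by the subgroup-closure part of Proposition \ref{bunch}(1), $S_{p_i}$ is again a $POEC$ group. For the converse, suppose every Sylow subgroup of the nilpotent group $G$ is $POEC$. Write $G\cong S_{p_1}\times\cdots\times S_{p_k}$ as above; by the direct-product-closure part of Proposition \ref{bunch}(1), applied inductively on $k$, the group $S_{p_1}\times\cdots\times S_{p_k}$ is $POEC$, hence so is $G$.

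If one prefers not to cite the direct-product closure as a black box in the converse, a direct verification is short: because the $|S_{p_i}|$ are pairwise coprime, any element of prime order $q$ in $\prod_i S_{p_i}$ has all coordinates trivial except in the unique factor $S_{p_j}$ with $q\mid |S_{p_j}|$, where the coordinate has order $q$. Thus two prime-order elements either lie in distinct factors, in which case they commute coordinatewise for trivial reasons, or they lie in the same factor $S_{p_j}$, in which case they commute because $S_{p_j}$ is $POEC$; either way the two elements commute, so $G$ is $POEC$.

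There is no genuine obstacle here: the statement is essentially a packaging of the decomposition of a finite nilpotent group into Sylow subgroups plus the (already proven) fact that the class of $POEC$ groups is closed under taking subgroups and finite direct products. The only point requiring a sentence of care is the observation, used in the converse, that an element of prime order in a direct product of groups of pairwise coprime orders is supported in a single factor — but this is immediate from the fact that the order of a tuple is the least common multiple of the orders of its coordinates.
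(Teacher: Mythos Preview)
Your proof is correct and follows essentially the same approach as the paper: both directions are obtained from the decomposition of a finite nilpotent group as the direct product of its Sylow subgroups together with the subgroup-closure and direct-product-closure of the $POEC$ property from Proposition \ref{bunch}(1). Your additional direct verification of the converse is a pleasant bonus but not needed.
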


 \begin{proof}
     Let $G$ be a nilpotent $POEC$ group. As $POEC$ is a subgroup-closed property, any subgroup of $G$ and in particular Sylow subgroups are $POEC$. Conversely, let $G$ be a nilpotent group such that its Sylow subgroups $P_i$'s are $POEC$. As $POEC$ is direct product closed, $G$ is a $POEC$ group.
 \end{proof}

 The above proposition suggests that we should try to explore $POEC$ $p$-groups. Our focus is on $p$-groups of order $p^n$, where $n\geq 3$, as $p$-groups are commutative for $n\leq 2$. It is an interesting fact that, for an odd prime $p$, there is a non-abelian $POEC$ group of order $p^3$, namely $\mathbb{Z}_{p^2}\rtimes \mathbb{Z}_p$.  Using this, we can always construct a non-abelian $POEC$ group of order $p^n$ for all $n\geq 3$, as $(\mathbb{Z}_{p^2}\rtimes \mathbb{Z}_p)\times \mathbb{Z}_{p^{n-3}}$ serves our purpose. Moreover, this result also holds for $2$-groups due to $Q_{2^n}$ for $n\geq 3$. Note that for a non-abelian $POEC$ $p$-group ($p$ is odd) of order $p^n$, by Theorem 5.4.10.ii, p.199 \cite{gorenstein}, we have $p<|G_p|<p^n$. In fact for all $r$ with $2 \leq r \leq n-1$, we can construct a non-abelian $POEC$ $p$-group of order $p^n$ such that $|G_p|=p^r$. For $p=2$, along with the above values of $r$, we can also get $|G_p|=2$. On the other hand, we would like to mention that there always exists a non-$POEC$ group of order $p^n$ for all $n\geq 3$ due to existence of Heisenberg group and Dihedral groups.

 \subsection{Center of $POEC$ groups}
 
It is observed via numerical examples that $POEC$ groups have non-trivial center. In this section, we prove some partial results in this direction.

\begin{proposition}\label{[G:P[G]]is_prime_power}
    If $G$ is a $POEC$ group such that $[G:P[G]]$ is a prime power, then $Z(G)$ is non-trivial.
\end{proposition}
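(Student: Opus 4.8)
The plan is to reduce the statement to the elementary fact that whenever a $q$-group acts on a nontrivial finite $q$-group it has a nontrivial fixed point, applied to the characteristic subgroup $G_q$ of elements of order $q$, where $q$ is the prime with $[G:P[G]]$ a power of $q$.

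First I would dispose of the trivial case: if $G$ is abelian then $Z(G)=G\neq\{e\}$, so assume $G$ is non-abelian. Then, as recalled before the statement, $P[G]$ is a proper \emph{abelian} normal subgroup of $G$, so we may write $[G:P[G]]=q^{m}$ with $q$ prime and $m\geq 1$. Since $q$ divides $|G|$, Cauchy's theorem produces an element of order $q$; hence $G_q\neq\{e\}$, and by the POEC property $G_q$ is an elementary abelian $q$-group, characteristic (hence normal) in $G$, and contained in $P[G]$. Write $|G_q|=q^{\beta}$ with $\beta\geq 1$.

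The key step is the observation that $G$ acts on $G_q$ by conjugation through a $q$-group. Indeed, since $P[G]$ is abelian and $G_q\leq P[G]$, every element of $P[G]$ centralizes $G_q$, i.e.\ $P[G]\leq C_G(G_q)$; therefore $G/C_G(G_q)$ is a quotient of $G/P[G]$ and so has order dividing $q^{m}$. The conjugation action of $G$ on the finite set $G_q$ has kernel $C_G(G_q)$, hence is really an action of the $q$-group $\bar G:=G/C_G(G_q)$ on the $q$-group $G_q$. By the orbit-counting congruence for a $q$-group acting on a finite set, $|G_q|\equiv\bigl|\mathrm{Fix}_{\bar G}(G_q)\bigr|\pmod q$; since $|G_q|=q^{\beta}$ with $\beta\geq 1$, the fixed-point set has order divisible by $q$, and as it contains $e$ it has size at least $q$. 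But the fixed points of the conjugation action are exactly $G_q\cap Z(G)$, whence $Z(G)\neq\{e\}$.

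The only delicate point is establishing that $G/C_G(G_q)$ is a $q$-group, and this is precisely where both hypotheses are used: the POEC property forces $G_q$ into the abelian group $P[G]$ (so $C_G(G_q)\supseteq P[G]$), and the assumption that $[G:P[G]]$ is a prime power then pins the acting group down to a $q$-group; everything afterwards is the standard fixed-point lemma. If $[G:P[G]]$ had two distinct prime divisors, $\bar G$ need not act on $G_q$ as a $q$-group and the argument collapses, which is consistent with the proposition being only a partial result. (One could alternatively invoke Schur--Zassenhaus to write $G=N\rtimes P$ with $N$ the Hall subgroup of $P[G]$ coprime to $q$ and $P$ a Sylow $q$-subgroup, then run a coprime-action argument, but the version above avoids the case analysis.)
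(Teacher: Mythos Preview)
Your proof is correct and takes a genuinely different route from the paper's. The paper proceeds constructively: since $[G:P[G]]$ is a power of $q=p_1$, for $i\geq 2$ one has $G_{p_i}=S_{p_i}$, so $H=G_{p_2}\cdots G_{p_k}$ is an abelian normal Hall subgroup and $G=H\cdot S_q$; choosing $a\in Z(S_q)$ of order $q$, the paper checks directly that $a$ commutes with every $g=h\,b$ (with $h\in H$, $b\in S_q$) --- with $h$ by the POEC property, since $h$ is a product of prime-order elements, and with $b$ because $a\in Z(S_q)$ --- hence $a\in Z(G)$. You instead bypass the Hall factorization entirely: from $P[G]\leq C_G(G_q)$ you deduce that $G/C_G(G_q)$ is a $q$-group, and the fixed-point congruence for a $q$-group acting on the $q$-power-sized set $G_q$ forces $|G_q\cap Z(G)|\geq q$. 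Your version is slicker and needs neither the decomposition $G=H\cdot S_q$ nor any element-by-element commutation check; the paper's version has the modest advantage of exhibiting a specific central element. (The Schur--Zassenhaus alternative you sketch parenthetically at the end is, in fact, essentially the paper's argument.)
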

\begin{proof}
    Let  $|G|=p^{\alpha_1}_1p^{\alpha_2}_2\cdots p^{\alpha_k}_k$ where $p_i$'s are distinct primes and without loss of generality, let $[G:P[G]]=p^{\beta_1}_1$ where $\beta_1\leq \alpha_1$. Thus $G_{p_i}=S_{p_i}$ for $i=2,\ldots,k$ where $S_{p_i}$ denotes Sylow $p_i$-subgroup of $G$. Observe that $H=S_{p_2}S_{p_3}\cdots S_{p_k}$ is a normal Hall subgroup of $G$ and $G=HS_{p_1}$. Choose $a \in Z(S_{p_1})$ such that $\circ(a)=p_1$. Note that any element $g \in G$ is of the form $g=g_{p_2}g_{p_3}\cdots g_{p_k}\cdot b$, where $g_{p_i}\in G_{p_i}$ for all $i\geq 2$ and $b \in S_{p_1}$. Then 
    $$\begin{array}{lll}
      a\cdot g   & =a\cdot (g_{p_2}g_{p_3}\cdots g_{p_k})\cdot b & \\
        & =((g_{p_2}g_{p_3}\cdots g_{p_k})\cdot a) \cdot b, & \mbox{ as } G\mbox{ is POEC}\\
        & =(g_{p_2}g_{p_3}\cdots g_{p_k})\cdot (b \cdot a), & \mbox{ as } a \in Z(S_{p_1})\\
         & =g \cdot a & \\
    \end{array}$$
Thus $a \in Z(G)$. 
\end{proof}

\begin{corollary}
    If $G$ is a $POEC$ group such that $|G|$ is almost square-free, i.e., $|G|=p^{\alpha_1}_1p_2\cdots p_k$, then $Z(G)$ is non-trivial.
\end{corollary}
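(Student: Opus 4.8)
The plan is to reduce the statement directly to Proposition \ref{[G:P[G]]is_prime_power} by showing that the almost square-free hypothesis forces $[G:P[G]]$ to be a power of the single exceptional prime $p_1$. If $G$ is abelian there is nothing to prove, so assume $G$ is non-abelian and write $|G|=p_1^{\alpha_1}p_2\cdots p_k$ with the $p_i$ distinct primes.

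First I would pin down $G_{p_i}$ for $i\geq 2$. Since $|G|$ is almost square-free, the Sylow $p_i$-subgroup $S_{p_i}$ has order $p_i$ for each $i\geq 2$, hence is cyclic of prime order. By Cauchy's theorem $G$ contains an element of order $p_i$, so $G_{p_i}$ is a non-trivial subgroup of a group of order $p_i$, forcing $G_{p_i}=S_{p_i}$; equivalently $\beta_i=\alpha_i=1$ for all $i\geq 2$. Using the decomposition $P[G]\cong G_{p_1}\times G_{p_2}\times\cdots\times G_{p_k}$ recalled at the start of this section, with $|G_{p_i}|=p_i^{\beta_i}$, this gives $|P[G]|=p_1^{\beta_1}p_2\cdots p_k$ and therefore
$$[G:P[G]]=\frac{p_1^{\alpha_1}p_2\cdots p_k}{p_1^{\beta_1}p_2\cdots p_k}=p_1^{\alpha_1-\beta_1},$$
a prime power. (Since $G$ is non-abelian it is $SIP$, so $P[G]$ is a proper subgroup and in fact $\alpha_1>\beta_1$, though this is not needed for the reduction.) Now Proposition \ref{[G:P[G]]is_prime_power} applies and yields $Z(G)\neq\{e\}$.

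There is no serious obstacle in this argument; the only point requiring a moment's care is the identification $G_{p_i}=S_{p_i}$ for the non-exceptional primes, which is what guarantees those primes contribute nothing to the index $[G:P[G]]$. Once that is in place the corollary is an immediate consequence of the preceding proposition.
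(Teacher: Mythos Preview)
Your argument is correct and is exactly the intended reduction: the almost square-free hypothesis forces $\beta_i=\alpha_i=1$ for $i\geq 2$, so $[G:P[G]]=p_1^{\alpha_1-\beta_1}$ is a power of $p_1$, and Proposition~\ref{[G:P[G]]is_prime_power} applies. This is the same route the paper takes, the corollary being stated immediately after that proposition as a direct consequence.
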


\begin{remark}
    The above corollary is not true in general, as $A_4$ has  trivial center. 
\end{remark}

\begin{proposition}\label{non-trivial-center-second-result}
    If $G$ is a $POEC$ group such that $|G|=p_1p^{\alpha_2}_2\cdots p^{\alpha_k}_k$ where $p_i\nmid (p_1-1)$ for all $i$, then $Z(G)$ is non-trivial.
\end{proposition}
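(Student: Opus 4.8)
The plan is to exhibit a non-trivial central $p_1$-subgroup. Since $p_1$ divides $|G|$ but $p_1^2$ does not, a Sylow $p_1$-subgroup $S_{p_1}$ has order $p_1$; I will show that in fact $S_{p_1}\leq Z(G)$, which immediately gives the conclusion.

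First I would pin down $G_{p_1}$. By Cauchy's theorem $G$ has an element of order $p_1$, so $G_{p_1}\neq\{e\}$. Since $G$ is a $POEC$ group, all elements of order $p_1$ commute, so $G_{p_1}$ is an abelian group generated by elements of order $p_1$; hence it is elementary abelian of exponent $p_1$, in particular a $p_1$-group. Because the full power of $p_1$ dividing $|G|$ is just $p_1$, this forces $|G_{p_1}| = p_1$, so $G_{p_1}$ is a Sylow $p_1$-subgroup of $G$, and it is normal in $G$ since it is characteristic.

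Next I would consider the conjugation homomorphism $\varphi\colon G\to\mathrm{Aut}(G_{p_1})$, whose kernel is $C_G(G_{p_1})$. As $G_{p_1}\cong\mathbb{Z}_{p_1}$, we have $\mathrm{Aut}(G_{p_1})\cong\mathbb{Z}_{p_1-1}$, so $[G:C_G(G_{p_1})]$ divides $p_1-1$; on the other hand it divides $|G| = p_1p_2^{\alpha_2}\cdots p_k^{\alpha_k}$. The primes dividing $|G|$ are exactly $p_1,\dots,p_k$; none of $p_2,\dots,p_k$ divides $p_1-1$ by hypothesis, and $p_1\nmid p_1-1$ trivially, so $\gcd(|G|,p_1-1)=1$. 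Therefore $G=C_G(G_{p_1})$, i.e. $G_{p_1}\leq Z(G)$, and $Z(G)$ is non-trivial.

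I do not expect any serious obstacle in this argument. The only place where the $POEC$ hypothesis is used in an essential way is the step showing $G_{p_1}$ is an elementary abelian $p_1$-group and hence a full Sylow $p_1$-subgroup of order $p_1$; without commutativity of the prime-order elements, $G_{p_1}$ could fail to be a $p_1$-group (indeed it could be all of $G$). Once $G_{p_1}$ is identified as a normal subgroup of prime order $p_1$, the rest is the standard fact that a normal subgroup whose order is coprime to the order of its automorphism group is central, and the arithmetic condition $p_i\nmid p_1-1$ is precisely what supplies that coprimality.
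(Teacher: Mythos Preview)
Your argument is correct and follows the same route as the paper: identify $G_{p_1}\cong\mathbb{Z}_{p_1}$ (the paper states this implicitly), apply the $N/C$-theorem, and use the arithmetic hypothesis $p_i\nmid(p_1-1)$ to force $G=C_G(G_{p_1})$. Your version is simply more explicit about why the $POEC$ hypothesis guarantees $|G_{p_1}|=p_1$.
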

\begin{proof}
    Using $N/C$-theorem on $G_{p_1}$, we get $$G/C_G(G_{p_1})\leq Aut(\mathbb{Z}_{p_1}).$$ Note that $|Aut(\mathbb{Z}_{p_1})|=p_1-1$ and as $p_i\nmid (p_1-1)$ for all $i$, we have $G=C_G(G_{p_1})$, i.e., $G_{p_1}\subseteq Z(G)$.
\end{proof}

\begin{corollary}
    If $G$ is a $POEC$ group such that $|G|=p_1p^{\alpha_2}_2\cdots p^{\alpha_k}_k$ where $p_1$ is the smallest prime factor of $|G|$, then $G_{p_1}\subseteq Z(G)$.
\end{corollary}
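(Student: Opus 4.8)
The plan is to derive this as an immediate consequence of Proposition \ref{non-trivial-center-second-result}, so the only real task is to check that its hypothesis holds automatically under the present assumptions. Write $|G| = p_1 p_2^{\alpha_2}\cdots p_k^{\alpha_k}$ with $p_1$ occurring to the first power and, by the standing assumption, $p_1$ the smallest prime factor, so $p_1 < p_i$ for all $i \ge 2$. First I would record the elementary observation that $p_i \nmid (p_1 - 1)$ for every $i \in \{1,\dots,k\}$: for $i = 1$ this is clear because $1 \le p_1 - 1 < p_1$; for $i \ge 2$ it holds because $p_i > p_1 > p_1 - 1 \ge 1$, so $p_1 - 1$ is a positive integer strictly smaller than $p_i$ and therefore not a multiple of $p_i$.

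With this verified, the hypotheses of Proposition \ref{non-trivial-center-second-result} are met verbatim, and that proposition gives $G_{p_1} \subseteq Z(G)$, which is exactly the claim. (The underlying mechanism, already present in the proof of Proposition \ref{non-trivial-center-second-result}, is that $p_1 \| |G|$ together with the $POEC$ condition forces $G_{p_1} \cong \mathbb{Z}_{p_1}$, whence the $N/C$-theorem yields $G/C_G(G_{p_1}) \leq Aut(\mathbb{Z}_{p_1})$, a group of order $p_1 - 1$; since no prime dividing $|G|$ divides $p_1-1$, the quotient is trivial and $G = C_G(G_{p_1})$.) There is essentially no obstacle beyond the trivial number-theoretic check just described; all the substantive content resides in Proposition \ref{non-trivial-center-second-result}.
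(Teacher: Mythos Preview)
Your proposal is correct and matches the paper's intent: the corollary is stated without proof precisely because it is immediate from Proposition~\ref{non-trivial-center-second-result} once one notes that $p_1$ being the smallest prime divisor forces $p_i \nmid (p_1-1)$ for all $i$. One small point of precision: the \emph{statement} of Proposition~\ref{non-trivial-center-second-result} only asserts $Z(G)$ is non-trivial, while the stronger conclusion $G_{p_1}\subseteq Z(G)$ comes from its \emph{proof}; you effectively acknowledge this in your parenthetical, so there is no real gap.
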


\begin{remark}
    If $G$ is a $POEC$ group such that $|G|=p^{\alpha_1}_1p^{\alpha_2}_2\cdots p^{\alpha_k}_k$ where $\alpha_i\geq 1$, then $Z(G)$ is non-trivial if and only if $G_{p_j}\cap Z(G)$ is non-trivial for some $j$. The above $p_j$ is not necessarily the smallest prime divisor of $|G|$, e.g., $(\mathbb{Z}_2\times \mathbb{Z}_2)\rtimes \mathbb{Z}_9$ is a $POEC$ group with a center of order $3$.
\end{remark}

\subsection{Perfect $POEC$ groups}
 
\begin{theorem}
    The smallest non-solvable group with $POEC$ must be perfect.
\end{theorem}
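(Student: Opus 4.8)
The plan is to argue by minimality, using only that $POEC$ is subgroup-closed and that solvability is closed under extensions. Let $G$ be a non-solvable $POEC$ group of smallest possible order. (One should note in passing that such groups exist, so that ``the smallest'' is meaningful — for instance a perfect one has order divisible by $8p^2q^2$ by Proposition~\ref{bunch}(9) — but this is not part of what must be shown.) Suppose, towards a contradiction, that $G$ is \emph{not} perfect, i.e.\ its commutator subgroup satisfies $G' \subsetneq G$.

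The first step is to observe that $G'$ is again a $POEC$ group: indeed, $G' \le G$, and $POEC$ is subgroup-closed by Proposition~\ref{bunch}(1). The second step is to show that $G'$ is non-solvable. If $G'$ were solvable, then since $G/G'$ is abelian (hence solvable) and an extension of a solvable group by a solvable group is solvable, $G$ itself would be solvable — contradicting the choice of $G$. Hence $G'$ is non-solvable.

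Combining the two steps, $G'$ is a non-solvable $POEC$ group with $|G'| < |G|$ (the inequality is strict because $G'\neq G$ and $G$ is finite), which contradicts the minimality of $|G|$. Therefore the assumption that $G$ is not perfect is untenable, and $G$ must be perfect.

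I do not anticipate a genuine obstacle in this argument: both ingredients — subgroup-closedness of $POEC$ (already established) and the fact that $G$ is solvable whenever $G'$ is — are elementary, and the whole proof is a one-line minimal-counterexample reduction. The only point worth being careful about is phrasing ``smallest'': the conclusion is that \emph{every} non-solvable $POEC$ group of minimal order is perfect, which is exactly what the stepwise descent to $G'$ delivers.
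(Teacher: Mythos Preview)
Your argument is correct and is essentially identical to the paper's: both suppose $G'\subsetneq G$, use subgroup-closedness of $POEC$ (Proposition~\ref{bunch}(1)) together with minimality to force $G'$ solvable, and then the abelian quotient $G/G'$ yields solvability of $G$, a contradiction. The only cosmetic difference is the order of the contradiction (you first argue $G'$ must be non-solvable and then violate minimality, whereas the paper first invokes minimality to make $G'$ solvable and then violates non-solvability of $G$), but the content is the same.
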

\begin{proof}
    Let $G$ be the smallest non-solvable group with $POEC$. If $G'\subsetneq G$, then $G'$ is a proper subgroup of $G$ and hence $G'$ is a $POEC$ group and hence solvable. Also as $G/G'$ is abelian, it is solvable. Thus we must have $G$ to be solvable. Thus $G'=G$, i.e., $G$ is perfect.
\end{proof}

\begin{remark}\label{perfect-T}
    There is a perfect $POEC$ group $T$ of order $1215000=2^3\cdot 3^5\cdot 5^4$. Using GAP \cite{GAP4}, one can check that it is the smallest non-solvable $POEC$ group. We will denote this group by $T$ throughout the paper.\footnote{The authors are grateful to Professor Alexander Hulpke for pointing out this example.} Since direct product of perfect groups are perfect, there exist infinitely many perfect $POEC$ groups.
\end{remark}

\begin{remark}
    There exist finite $POEC$ groups which are neither solvable nor perfect. Let $P$ be a perfect $POEC$ group and $A$ be any abelian group. Set $G=P\times A$. Then $G$ is a $POEC$ group. As $P\leq G$, $G$ is non-solvable and as $G'\cong P'\times A'\cong P < G$,  $G$ is not perfect.
\end{remark}

\begin{remark}
The smallest order of a $POEC$ group which is neither solvable nor perfect is $2\times 1215000=2430000$. Clearly $T\times \mathbb{Z}_2$ is a valid candidate of that order. Let $G$ be any such group. As $G$ is non-solvable and non-perfect, it must have a proper perfect $POEC$ subgroup. As $T$ is the smallest perfect $POEC$ group, $G$ must have a proper subgroup at least as large as $|T|$. Thus $|G|\geq 2|T|$.
\end{remark}

\begin{theorem}\label{cyclic-sylow-implies-not-perfect}
    If  $G$ is a $POEC$ group with a cyclic Sylow-$p$-subgroup, then $G$ is not perfect.
\end{theorem}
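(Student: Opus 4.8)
The plan is to argue by contradiction: assume $G$ is perfect, use the cyclicity of the Sylow $p$-subgroup together with the $POEC$ structure to produce a central subgroup of order $p$, and then contradict Proposition~\ref{key-result} by a counting argument on the $p$-part of the relevant orders.

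First I would pin down $G_p$. Since $G$ is $POEC$, the elements of order $p$ together with the identity form the elementary abelian subgroup $G_p\cong\mathbb{Z}_p^{\beta_p}$, and $G_p$ is characteristic, hence a normal $p$-subgroup of $G$; therefore $G_p\leq O_p(G)$, which is contained in every Sylow $p$-subgroup of $G$. As the Sylow $p$-subgroup $S_p$ is cyclic, $G_p$ is cyclic, and since $p\mid|G|$ yields (by Cauchy) an element of order $p$, we conclude $G_p\cong\mathbb{Z}_p$ exactly.

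Next I would push $G_p$ into the centre. Applying the $N/C$-theorem to the normal subgroup $G_p$ gives $G/C_G(G_p)\hookrightarrow\operatorname{Aut}(\mathbb{Z}_p)\cong\mathbb{Z}_{p-1}$, which is abelian; since $G$ is perfect it has no non-trivial abelian quotient, so $G=C_G(G_p)$, i.e.\ $G_p\leq Z(G)$. In particular $p\mid|Z(G)|$. Finally I would invoke Proposition~\ref{key-result}: writing $t=[G:Z(G)]$, we have $g^t=e$ for every $g\in G$. Taking $g$ to be a generator of the cyclic group $S_p$, of order $p^{\alpha}$ with $p^{\alpha}\,\|\,|G|$, forces $p^{\alpha}\mid t$. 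But $t=|G|/|Z(G)|$ while $p\mid|Z(G)|$, so the $p$-part of $t$ is at most $p^{\alpha-1}$ — a contradiction. Hence $G$ cannot be perfect.

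I expect the only mildly delicate point to be the first step, namely justifying that $G_p$ has order exactly $p$ (which is where both the $POEC$ hypothesis and the normality of $G_p$ enter); everything afterwards is a short chain using the $N/C$-theorem and Proposition~\ref{key-result}. It is worth checking the edge case $\alpha=1$ explicitly: there the contradiction is even more immediate, since $Z(G)$ would need trivial $p$-part yet contain $G_p\cong\mathbb{Z}_p$.
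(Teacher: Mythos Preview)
Your argument is correct and follows essentially the same route as the paper: show $G_p\cong\mathbb{Z}_p$ from the cyclic Sylow hypothesis, use the $N/C$-theorem together with perfectness to force $G_p\leq Z(G)$, and then derive a contradiction from Proposition~\ref{key-result} by comparing the $p$-part of $[G:Z(G)]$ with the order of a generator of $S_p$. Your justification of $G_p\leq S_p$ via $O_p(G)$ is a bit more explicit than the paper's, but otherwise the proofs coincide.
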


\begin{proof}
    If possible, let $G$ be perfect. Let $S_p=\langle a \rangle$ be the cyclic Sylow-$p$-subgroup of $G$ of order $p^k$ and $G_p$ be the subgroup generated by elements of order $p$ in $G$. Then $\mathbb{Z}_p\cong G_p\leq S_p$ and $G_p$ is normal in $G$. Then by $N/C$ - theorem, $G/C_G(G_p)$ is congruent to a subgroup of $\mathbb{Z}^*_p$. As $\mathbb{Z}^*_p$ is cyclic, $G/C_G(G_p)$ is abelian. Again as $G$ is perfect, $G/C_G(G_p)$ is also perfect. Thus $G/C_G(G_p)$ is trivial, i.e., $G=C_G(G_p)$, i.e., $G_p\leq Z(G)$.

    Let $[G:Z(G)]=t$. Then $p^k$ does not divide $t$ and hence $a^t\neq e$. But by Proposition \ref{key-result}, $g^t=e$ for all $g \in G$, a contradiction. Thus $G$ is not perfect.
\end{proof}

\begin{corollary}\label{alpha>1}
    If $G$ is a perfect $POEC$ group with $|G|=p^{\alpha_1}_1p^{\alpha_2}_2\cdots p^{\alpha_k}_k$, then $\alpha_i>1$ for all $i$.
\end{corollary}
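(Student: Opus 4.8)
The plan is to deduce this immediately from Theorem \ref{cyclic-sylow-implies-not-perfect} by contraposition. I would argue by contradiction: suppose $G$ is a perfect $POEC$ group but $\alpha_i = 1$ for some index $i$. Then a Sylow $p_i$-subgroup $S_{p_i}$ of $G$ has order $p_i^{\alpha_i} = p_i$, i.e.\ prime order.

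The key observation is then the elementary fact that every group of prime order is cyclic, so $S_{p_i}$ is a cyclic Sylow $p_i$-subgroup of the $POEC$ group $G$. Applying Theorem \ref{cyclic-sylow-implies-not-perfect} with $p = p_i$ forces $G$ to be non-perfect, which contradicts the hypothesis. Hence no $\alpha_i$ can equal $1$, i.e.\ $\alpha_i > 1$ for all $i$, which is what we want.

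I do not anticipate a genuine obstacle here: the entire substance of the argument — the interplay between the $POEC$ hypothesis, the $N/C$-theorem, and Proposition \ref{key-result} on perfect groups — is already packaged inside Theorem \ref{cyclic-sylow-implies-not-perfect}, and the corollary is just the observation that ``$\alpha_i = 1$'' is the special case ``Sylow $p_i$-subgroup cyclic of prime order.'' If one wanted to add context, one could remark that this refines Proposition \ref{bunch}(9), which only guarantees two primes with exponent at least $2$ (in fact $8p^2q^2 \mid |G|$); the corollary upgrades this to \emph{every} prime divisor of $|G|$ occurring to a power at least $2$.
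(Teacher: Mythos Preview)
Your proposal is correct and matches the paper's own proof essentially verbatim: assume some $\alpha_i=1$, note the Sylow $p_i$-subgroup is then cyclic, and invoke Theorem~\ref{cyclic-sylow-implies-not-perfect} for a contradiction. The additional remark relating this to Proposition~\ref{bunch}(9) is accurate context but not part of the paper's argument.
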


\begin{proof}
If $\alpha_i=1$ for some $i$, then the corresponding Sylow-$p_i$-subgroup is cyclic and hence by the above theorem, $G$ is not perfect, a contradiction.
\end{proof}

    


\begin{proposition}\label{S2=Q8}
    If $G$ is a perfect $POEC$ group such that $2^4$ does not divide $|G|$, then the Sylow-$2$-subgroup of $G$ is isomorphic to the quaternion group, $Q_8$ and $2\mid |Z(G)|$. 
\end{proposition}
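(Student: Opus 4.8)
The plan is to determine the Sylow $2$-subgroup $S_2$ by a process of elimination. Since $G$ is perfect, Proposition~\ref{bunch}(9) gives $8\mid|G|$, and together with $2^4\nmid|G|$ this forces $|S_2|=8$. The five groups of order $8$ are $\mathbb{Z}_8$, $\mathbb{Z}_4\times\mathbb{Z}_2$, $\mathbb{Z}_2^3$, $D_4$ and $Q_8$. Because $POEC$ is subgroup-closed, $S_2$ is a $POEC$ group, which excludes $D_4$ (it has non-commuting involutions); and $\mathbb{Z}_8$ is cyclic, so Theorem~\ref{cyclic-sylow-implies-not-perfect} excludes it. So it remains to rule out the abelian cases $\mathbb{Z}_2^3$ and $\mathbb{Z}_4\times\mathbb{Z}_2$. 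The main tool for this is the observation that in a $POEC$ group all involutions commute, so $G_2=\{e\}\cup\{x\in G:\circ(x)=2\}$ is an elementary abelian $2$-subgroup, characteristic in $G$; being a normal $2$-subgroup it lies in every Sylow $2$-subgroup, so $G_2\leq S_2$.

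If $S_2\cong\mathbb{Z}_2^3$, every non-identity element of $S_2$ is an involution, hence $S_2\leq G_2$ and therefore $G_2=S_2$ is a normal Sylow $2$-subgroup. Then $G/S_2$ is a perfect group of odd order, hence trivial by the Feit--Thompson theorem, so $G=S_2$ would be a $2$-group --- contradicting that $G$ is perfect and non-trivial.

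If $S_2\cong\mathbb{Z}_4\times\mathbb{Z}_2$, its three involutions generate its unique Klein four-subgroup $V$; since $G_2$ is elementary abelian with $V\subseteq G_2\leq S_2$, we get $G_2=V$, of order $4$. The $N/C$-theorem gives $G/C_G(V)\hookrightarrow Aut(V)\cong GL(2,2)\cong S_3$, and since this image is a perfect subgroup of $S_3$, hence trivial, we conclude $V=G_2\leq Z(G)$. Thus $Z(G)\cap S_2$, the Sylow $2$-subgroup of $Z(G)$, contains $V$; it cannot equal $S_2$ (else $S_2$ is a central, hence normal, Sylow $2$-subgroup, contradicting the previous paragraph), so $|Z(G)\cap S_2|=4$ and $t:=[G:Z(G)]$ has $2$-adic valuation $v_2(t)=3-2=1$, say $t=2m$ with $m$ odd. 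But Proposition~\ref{key-result} forces $a^t=e$ for all $a\in G$, whereas any $a\in S_2$ of order $4$ satisfies $a^t=a^{2m}=a^2\neq e$ --- a contradiction.

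Hence $S_2\cong Q_8$. As $Q_8$ has a unique involution $z$ and every involution of $G$ lies in $G_2\leq S_2$, the element $z$ is the unique involution of $G$ and $G_2=\langle z\rangle$ has order $2$; a normal subgroup of order $2$ is central, so $z\in Z(G)$ and $2\mid|Z(G)|$. The step I expect to require the most care is the $\mathbb{Z}_4\times\mathbb{Z}_2$ case, since it is the only one needing the $N/C$-theorem together with the exponent bound of Proposition~\ref{key-result}, and one must track the precise $2$-part of $Z(G)$.
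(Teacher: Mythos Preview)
Your proof is correct, but the paper's argument is shorter and avoids the case split. Instead of testing the five groups of order $8$ one by one, the paper first pins down $|G_2|$: since $G_2$ is a normal $2$-subgroup it sits inside $S_2$, so $|G_2|\in\{2,4,8\}$; but $G/G_2$ is a non-trivial perfect group (it has order divisible by the odd part of $|G|$), and a non-trivial perfect group always has order divisible by $4$, so $|G_2|=2$. Thus $S_2$ has a unique involution, leaving only $\mathbb{Z}_8$ and $Q_8$, and Theorem~\ref{cyclic-sylow-implies-not-perfect} eliminates $\mathbb{Z}_8$. In effect the paper handles your $\mathbb{Z}_2^3$ and $\mathbb{Z}_4\times\mathbb{Z}_2$ cases simultaneously via the single observation $4\mid |G/G_2|$, sparing the $N/C$-plus-exponent computation you carry out for $\mathbb{Z}_4\times\mathbb{Z}_2$. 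Your route has the minor advantage of being self-contained about \emph{why} $4$ must divide a perfect group's order (you invoke Feit--Thompson explicitly), whereas the paper uses that fact without comment.
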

\begin{proof}
By Proposition \ref{bunch}(9) and as $16$ does not divide $|G|$, we have $|G|=8p^{\alpha_1}_1p^{\alpha_2}_2\cdots p^{\alpha_k}_k$. Again as $G/G_2$ is perfect, i.e., $4$ divides $|G/G_2|$, we have $|G_2|=2$. As $G_2$ is the subgroup generated by elements of order $2$ in $G$, we conclude that the Sylow-$2$-subgroup $S_2$ of $G$ has exactly one element of order $2$, thereby enforcing $G_2\leq Z(G)$. Again, since $S_2$ is a group of order $8$, the only possibilities are $S_2 \cong \mathbb{Z}_8$ or $Q_8$. As $\mathbb{Z}_8$ is cyclic, by Theorem \ref{cyclic-sylow-implies-not-perfect}, $S_2\cong Q_8$.
\end{proof}

From Proposition \ref{bunch}(9), it is known that if $G$ is a perfect $POEC$ group, then there exist two odd primes $p$ and $q$ such that $8p^2q^2$ divides $|G|$. Now, we are in a position to say something more.
\begin{theorem}\label{perfect-8p3q3}
     If $G$ is a perfect $POEC$ group, then $8p^3q^3$ divides $|G|$.
\end{theorem}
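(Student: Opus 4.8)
The plan is to build on Proposition \ref{bunch}(9) and Proposition \ref{S2=Q8} by running essentially the same argument that proved $p^2 \mid |G|$ a second time, on the quotient $G/G_p$. Recall from Proposition \ref{bunch}(9) that a perfect $POEC$ group $G$ has $8p^2q^2 \mid |G|$ for two odd primes $p,q$; I want to upgrade each of the exponents $2$ to $3$. The key structural facts available are: $G_p$ is a characteristic (hence normal) elementary abelian $p$-subgroup, $G/G_p$ is again a perfect $POEC$ group by Proposition \ref{bunch}(3) applied after observing $G_p$ lies inside a normal Hall-or-Sylow structure — more precisely $G/G_p$ is perfect because the abelianization of a quotient of a perfect group is trivial, and it is $POEC$ since quotients of $G$ by $G_p$ preserve the property (the argument in Proposition \ref{bunch}(3) adapts, or one uses that prime-order elements of $G/G_p$ lift to prime-order elements of $G$).

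First I would show that for the odd prime $p$ with $p^2 \mid |G|$, we cannot have $|G_p| = p^{\alpha_p}$, i.e., $G_p$ cannot be the full Sylow $p$-subgroup. Indeed if $G_p = S_p$, then $G/G_p$ is a perfect $POEC$ group of order not divisible by $p$; but by Corollary \ref{alpha>1} every prime dividing the order of a perfect $POEC$ group appears with exponent $\geq 2$, and by Proposition \ref{bunch}(9) at least two odd primes appear — so $G/G_p$ is still a "large" perfect $POEC$ group. This does not immediately contradict anything, so instead I would argue as follows: since $G$ is $POEC$, $G_p \cong \mathbb{Z}_p^{\beta_p}$ is elementary abelian, so if $G_p = S_p$ then $S_p$ is elementary abelian of rank $\beta_p \geq 1$. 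Then apply Proposition \ref{bunch}(9) to $G/G_p$ to force $p' \mid |G/G_p|$ — no help with $p$. The cleaner route: use Corollary \ref{alpha>1} directly — if $\alpha_p = 2$ and $G_p = S_p$, then $|G_p| = p^2$; look at $G/G_p$, a perfect $POEC$ group with $p \nmid |G/G_p|$. That is allowed. So the real content must come from combining \emph{minimality-free} counting with the $Q_8$ fact.

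Here is the approach I would actually carry out. For each odd prime $p$ with $p^2 \| |G|$ (exactly dividing), suppose for contradiction $p^3 \nmid |G|$, so $|S_p| = p^2$; since $G$ is $POEC$, $S_p$ is abelian, so $S_p \cong \mathbb{Z}_{p^2}$ or $\mathbb{Z}_p \times \mathbb{Z}_p$. If $S_p \cong \mathbb{Z}_{p^2}$ it is cyclic and Theorem \ref{cyclic-sylow-implies-not-perfect} gives a contradiction. If $S_p \cong \mathbb{Z}_p \times \mathbb{Z}_p$, then $S_p = G_p$ (all its elements have order $1$ or $p$), so $G_p$ is a normal elementary abelian $p$-group of order $p^2$ and $G/G_p$ is a perfect $POEC$ group with $p \nmid |G/G_p|$. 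Now iterate: $G/G_p$ is a perfect $POEC$ group, so by Proposition \ref{bunch}(9) there are two odd primes $r,s$ with $8r^2s^2 \mid |G/G_p|$, hence $8 r^2 s^2 \mid |G|$ with $r,s \neq p$. Repeat the whole dichotomy for $r$: either $r^3 \mid |G|$ (done for $r$), or $S_r \cong \mathbb{Z}_r^2$ and we can quotient again. Since $|G|$ is finite this descent terminates, and at the bottom we reach a perfect $POEC$ group all of whose odd Sylow subgroups are elementary abelian of rank $2$ and whose Sylow $2$-subgroup is $Q_8$ (by Proposition \ref{S2=Q8}, if $16 \nmid |G|$) or larger; I would then derive a contradiction at the base case using Proposition \ref{key-result} together with the center constraint, exactly as in the proof of Theorem \ref{cyclic-sylow-implies-not-perfect}: from $G_p \leq Z(G)$-type statements (obtained via the $N/C$ theorem on $G_p \cong \mathbb{Z}_p^2$, noting $\mathrm{Aut}(\mathbb{Z}_p^2) = GL(2,p)$ has perfect quotient issues) one forces a commuting element whose order exceeds $t = [G:Z(G)]$, contradicting $g^t = e$.

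The main obstacle is the step where $S_p \cong \mathbb{Z}_p \times \mathbb{Z}_p$: unlike the cyclic case, $\mathrm{Aut}(\mathbb{Z}_p^2) = GL(2,p)$ is \emph{not} abelian and has $SL(2,p)$ as a (nearly) perfect section, so the clean "$G = C_G(G_p)$ hence $G_p \leq Z(G)$" argument of Theorem \ref{cyclic-sylow-implies-not-perfect} fails outright — $G/C_G(G_p)$ can be a nontrivial perfect subgroup of $GL(2,p)$, precisely the situation described by Proposition \ref{sl-perfect}. So the crux is to analyze $G/C_G(G_p) \hookrightarrow GL(2,p)$: by Proposition \ref{sl-perfect} its perfect part is $SL(2,p)$ or $SL(2,5)$, and in either case I would extract divisibility information about $|G|$ (for instance $|SL(2,5)| = 120 = 2^3 \cdot 3 \cdot 5$ forces small primes, and $SL(2,p)$ forces $p \cdot \frac{p^2-1}{\gcd} \mid |G/C_G(G_p)|$) that, combined with $G_p \leq C_G(G_p)$ and Proposition \ref{key-result} applied to the perfect group $G$, overdetermines the prime factorization and yields $p^3 \mid |G|$ after all. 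I expect this $GL(2,p)$ analysis — rather than the bookkeeping of the descent — to be where all the real work lies, and it is plausible the authors instead short-circuit it by quotienting by $C_G(G_p)$ or by invoking the classification-flavored Proposition \ref{sl-perfect} more heavily than I have sketched.
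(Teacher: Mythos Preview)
Your approach has a genuine gap, and it misses the key idea that makes the paper's proof short.

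The descent you sketch --- quotient by $G_p$ when $S_p \cong \mathbb{Z}_p \times \mathbb{Z}_p$, iterate, and at the bottom analyze $G/C_G(G_p) \hookrightarrow GL(2,p)$ via Proposition~\ref{sl-perfect} --- cannot be closed without extra hypotheses. You correctly identify the obstacle: $GL(2,p)$ has nontrivial perfect subgroups, so the $N/C$ argument of Theorem~\ref{cyclic-sylow-implies-not-perfect} does not go through, and $G/C_G(G_p) \cong SL(2,5)$ is a live possibility that your divisibility bookkeeping does not eliminate. This is not a minor technicality: it is exactly why the later Theorem~\ref{p3|G} carries the side conditions ``$2^4$ or $3$ or $5$ does not divide $|G|$'' or ``$p \not\equiv \pm 1 \pmod{10}$''. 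Without those, the $SL(2,5)$ case survives, and your base-case contradiction never materializes.

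The paper sidesteps the centralizer analysis entirely with a global argument. If the theorem fails, then by Proposition~\ref{bunch}(6) and Corollary~\ref{alpha>1} one can write $|G| = 2^{\alpha} p_1^{\beta} p_2^{2} \cdots p_t^{2}$ with $\alpha \geq 3$, $\beta \geq 2$, $t \geq 2$: at most one odd prime carries exponent $\geq 3$, all others carry exactly $2$. For each $i \geq 2$, Theorem~\ref{cyclic-sylow-implies-not-perfect} forces $S_{p_i} \cong \mathbb{Z}_{p_i} \times \mathbb{Z}_{p_i} = G_{p_i}$. Now form the single normal subgroup $H = G_{p_2} G_{p_3} \cdots G_{p_t}$ and pass to $G/H$, which has order $2^{\alpha} p_1^{\beta}$. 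By Burnside's $p^a q^b$ theorem $G/H$ is solvable; but it is also perfect as a quotient of $G$, hence trivial --- a contradiction. The point you are missing is to quotient by \emph{all} the rank-$2$ Sylows at once and invoke Burnside, rather than peeling them off one at a time and fighting $GL(2,p)$ at each step.
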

\begin{proof} We prove the result by contradiction. If the theorem does not hold, then there exists a perfect $POEC$ group $G$ such that $|G|=2^\alpha p^{\beta}_1p^2_2 \cdots p^2_t$, where $\alpha\geq 3,\beta\geq 2,t\geq 2$ and $p_i$'s are distinct odd primes. This follows from Proposition \ref{bunch}(10) and Corollary \ref{alpha>1}. 

Observe that $S_{p_i}=G_{p_i}\cong \mathbb{Z}_{p_i}\times \mathbb{Z}_{p_i}$ for all $i\geq 2$, using Theorem \ref{cyclic-sylow-implies-not-perfect}. Consider the normal subgroup $H=G_{p_2}G_{p_3}\cdots G_{p_t}$ of $G$. Then, as $|G/H|=2^\alpha p^{\beta}_1$, $G/H$ is solvable and being a quotient of a perfect group $G/H$ is perfect, a contradiction.
\end{proof}

\begin{corollary}
    If $G$ is a non-solvable $POEC$ group, then $8p^3q^3$ divides $|G|$.
\end{corollary}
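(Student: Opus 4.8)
The plan is to reduce the non-solvable case to the perfect case already established in Theorem \ref{perfect-8p3q3}. First I would pass to the \emph{perfect core} of $G$: consider the derived series $G = G^{(0)} \geq G^{(1)} \geq G^{(2)} \geq \cdots$, where $G^{(0)}=G$ and $G^{(n+1)}=(G^{(n)})'$. Since $G$ is finite, this descending chain stabilizes, and the stable value $L := G^{(\infty)}$ satisfies $L' = L$, so $L$ is a perfect group. Because $G$ is non-solvable, the derived series does not reach $\{e\}$, so $L \neq \{e\}$; and since a non-trivial perfect group cannot be abelian (an abelian group equals its own derived subgroup only when trivial), $L$ is a non-abelian perfect group.

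Next I would use the fact that POEC is subgroup-closed, Proposition \ref{bunch}(1): as $L \leq G$ and $G$ is POEC, the subgroup $L$ is itself a perfect POEC group. Applying Theorem \ref{perfect-8p3q3} to $L$ then produces two odd primes $p$ and $q$ with $8p^3q^3 \mid |L|$. Finally, since $L$ is a subgroup of $G$, Lagrange's theorem gives $|L| \mid |G|$, and hence $8p^3q^3 \mid |G|$, which is exactly the assertion of the corollary.

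There is essentially no obstacle here: the substance of the result lies entirely in Theorem \ref{perfect-8p3q3}, and this corollary is the routine ``pass to the perfect radical'' argument. The only point requiring a moment's care is verifying that $G^{(\infty)}$ is genuinely non-trivial, but this is precisely the meaning of $G$ being non-solvable, and the subgroup-closedness of POEC is what lets Theorem \ref{perfect-8p3q3} apply to it.
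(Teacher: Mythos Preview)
Your argument is correct and is exactly the intended one: the paper states this corollary without proof immediately after Theorem \ref{perfect-8p3q3}, and the implicit reasoning is precisely to pass to the terminal member of the derived series (a non-trivial perfect POEC subgroup by Proposition \ref{bunch}(1)), apply Theorem \ref{perfect-8p3q3} there, and use Lagrange. There is nothing to add.
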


\begin{remark}
    It follows from  Corollary \ref{alpha>1}, that if $G$ is a perfect $POEC$ group and $p$ is a prime dividing $|G|$, then $p^2$ divides $|G|$. In light of Theorem \ref{perfect-8p3q3}, it is natural to ask the following question: If $G$ is a perfect $POEC$ group and $p$ is a prime dividing $|G|$, is it necessary that $p^3$ divides $|G|$? We provide a partial answer to this.
\end{remark}

\begin{theorem}\label{p3|G}
    If $G$ is a perfect $POEC$ group such that $p\mid |G|$ and one of the following conditions hold:
    \begin{itemize}
        \item $2^4$ or $3$ or $5$ does not divide $|G|$;
        \item $p \not\equiv \pm 1~(mod~10)$,
    \end{itemize}
     then $p^3\mid |G|$.
\end{theorem}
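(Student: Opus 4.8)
I would prove this by contradiction. Assume $p\mid|G|$ but $p^{3}\nmid|G|$; by Corollary~\ref{alpha>1} exactly $p^{2}$ then divides $|G|$. If $p=2$ this already fails, since a perfect $POEC$ group satisfies $8\mid|G|$ by Proposition~\ref{bunch}(9), contradicting $2^{2}\,\|\,|G|$; so $p$ is odd. As $G$ is perfect and $POEC$, Theorem~\ref{cyclic-sylow-implies-not-perfect} excludes a cyclic Sylow $p$-subgroup, so the Sylow $p$-subgroup $S_{p}$, being of order $p^{2}$, is isomorphic to $\mathbb{Z}_{p}\times\mathbb{Z}_{p}$. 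Every non-identity element of $S_{p}$ has order $p$ and hence lies in $G_{p}$, so $S_{p}\leq G_{p}$; since $G_{p}$ is an elementary abelian $p$-group this forces $G_{p}=S_{p}$. Thus $G_{p}=S_{p}$ is a normal (indeed characteristic) elementary abelian subgroup of order $p^{2}$, and $S_{p}$ is the unique Sylow $p$-subgroup of $G$.

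Next I would apply the $N/C$-theorem to $G_{p}$. The conjugation action gives an embedding $G/C_{G}(G_{p})\hookrightarrow\operatorname{Aut}(G_{p})\cong GL(2,p)$; the image is perfect (a quotient of a perfect group) and therefore lies in $SL(2,p)$, since $GL(2,p)/SL(2,p)$ is abelian. The image is non-trivial: if $G=C_{G}(G_{p})$ then $S_{p}=G_{p}\leq Z(G)$, so $p^{2}\mid|Z(G)|$, hence $p\nmid[G:Z(G)]$, contradicting Proposition~\ref{key-result} applied to an element of order $p$. So $G/C_{G}(G_{p})$ is a non-trivial perfect subgroup of $SL(2,p)$, and by Proposition~\ref{sl-perfect} either (i)~$G/C_{G}(G_{p})\cong SL(2,p)$, or (ii)~$G/C_{G}(G_{p})\cong SL(2,5)$ and $p\equiv\pm1\pmod{10}$.

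In case (i), $G_{p}=S_{p}\leq C_{G}(G_{p})$ gives $p^{2}\mid|C_{G}(G_{p})|$, while $p\mid[G:C_{G}(G_{p})]=|SL(2,p)|=p(p^{2}-1)$; multiplying, $p^{3}\mid|G|$, the contradiction we want. So I only need to rule out case (ii) under each hypothesis. If $p\not\equiv\pm1\pmod{10}$, case (ii) is impossible by Proposition~\ref{sl-perfect}. If $3\nmid|G|$ or $5\nmid|G|$, case (ii) is also impossible, because then $[G:C_{G}(G_{p})]=|SL(2,5)|=2^{3}\cdot3\cdot5$ would divide $|G|$. The remaining subcase, $16\nmid|G|$ together with $G/C_{G}(G_{p})\cong SL(2,5)$, is the heart of the matter: by Proposition~\ref{S2=Q8} the group $G$ has a unique involution $\tau$, and $\tau\in Z(G)$. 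The image of $\tau$ in $G/C_{G}(G_{p})\cong SL(2,5)$ is either $e$ or the unique involution of $SL(2,5)$; the latter is the central one, and under $G/C_{G}(G_{p})\hookrightarrow SL(2,p)$ it must go to the unique involution $-I$ of $SL(2,p)$, which acts on $G_{p}\cong\mathbb{F}_{p}^{2}$ by inversion. But $\tau$ is central, so it acts trivially on $G_{p}$, while inversion is non-trivial on $G_{p}$ since $p$ is odd --- a contradiction. Hence the image of $\tau$ is $e$, i.e.\ $\tau\in C_{G}(G_{p})$; then $|C_{G}(G_{p})|$ is even and $|G|=|C_{G}(G_{p})|\cdot2^{3}\cdot3\cdot5$ is divisible by $2^{4}$, contradicting $16\nmid|G|$. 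This handles all hypotheses.

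I expect the last subcase to be the only real obstacle. The reductions to $G_{p}=S_{p}\cong\mathbb{Z}_{p}\times\mathbb{Z}_{p}$ normal, the appeal to the $N/C$-theorem and Proposition~\ref{sl-perfect}, and the disposal of case (i) are routine; what requires care is excluding a perfect $POEC$ group with $p^{2}\,\|\,|G|$, $16\nmid|G|$, and $G/C_{G}(G_{p})\cong SL(2,5)$, for which one must combine the uniqueness and centrality of the involution (Proposition~\ref{S2=Q8}) with the fact that the central involution of $SL(2,5)$ negates the natural $2$-dimensional module $G_{p}$.
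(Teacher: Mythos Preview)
Your proof is correct and follows the same route as the paper: reduce to $G_p=S_p\cong\mathbb{Z}_p\times\mathbb{Z}_p$, embed $G/C_G(G_p)$ in $SL(2,p)$ via the $N/C$-theorem, invoke Proposition~\ref{sl-perfect}, and eliminate the $SL(2,5)$ possibility under each hypothesis, using Proposition~\ref{S2=Q8} for the $16\nmid|G|$ subcase. The only cosmetic differences are that the paper rules out $G/C_G(G_p)\cong SL(2,p)$ up front by observing $p\nmid|G/C_G(G_p)|$ (since $S_p=G_p\leq C_G(G_p)$) rather than treating it as a separate case, and that your detour through the image of $\tau$ in $SL(2,5)$ is unnecessary, since $\tau\in Z(G)$ already gives $\tau\in C_G(G_p)$ directly.
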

\begin{proof}
    As $p\mid |G|$, it follows from  Corollary \ref{alpha>1} that $p^2$ divides $|G|$. Suppose $p^3\nmid |G|$. Then we have $S_p=G_p\cong \mathbb{Z}_p \times \mathbb{Z}_p$. 
    
    Now, by N/C theorem, $G/C_G(G_p)$ is isomorphic to a subgroup of $Aut(G_p)\cong GL(2,p)$. In fact, it is isomorphic to a subgroup of $SL(2,p)$. As $G/C_G(G_p)$ is perfect, $p\nmid G/C_G(G_p)$ and $p\mid |SL(2,p)|$, from Proposition \ref{sl-perfect}, it follows that either $G/C_G(G_p)$ is trivial or $G/C_G(G_p)\cong SL(2,5)$.

    In the later case, we have $|G/C_G(G_p)|=120=2^3\cdot 3 \cdot 5$. From Proposition \ref{S2=Q8}, this can not hold, if one of the above conditions is true. Thus $G/C_G(G_p)$ is trivial, i.e., $G_p\cong \mathbb{Z}_p \times \mathbb{Z}_p \leq Z(G)$, i.e., $p\nmid [G:Z(G)]$. Since $G$ contains elements of order $p$, by Proposition \ref{key-result}, we get a contradiction. Thus $p^3\mid |G|$.
\end{proof}

It is known that if $G$ is a finite perfect group, then $4$ divides $|G|$ and if $8$ does not divide $|G|$, then $3$ does. A stronger result holds for perfect $POEC$ groups.

\begin{theorem}\label{3^4|G}
    If $G$ is a perfect $POEC$ group such that $2^4 \nmid |G|$, then $3^4 \mid |G|$.
\end{theorem}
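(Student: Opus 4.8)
The plan is to combine Proposition~\ref{S2=Q8}, which forces a Sylow-$2$-subgroup $Q_8$ when $2^4\nmid|G|$, with Theorem~\ref{perfect-8p3q3} and Theorem~\ref{p3|G}. By hypothesis $2^4\nmid|G|$, so Proposition~\ref{S2=Q8} gives $|S_2|=8$ and $S_2\cong Q_8$; moreover $|G_2|=2$ and $G_2\leq Z(G)$. By Theorem~\ref{perfect-8p3q3} there are two distinct odd primes $p,q$ with $8p^3q^3\mid|G|$. The goal is to show one of these two primes must be $3$, so that $3^3\mid|G|$; then a short argument will upgrade $3^3$ to $3^4$ using Theorem~\ref{p3|G} again in a slightly refined form, or by ruling out $|S_3|=27$ directly.

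First I would argue that $3\mid|G|$. Suppose not. Then every odd prime $r$ dividing $|G|$ satisfies $r\geq 5$; in particular, for any such $r$ with $r^2\| |G|$, Theorem~\ref{p3|G} applies: since $2^4\nmid|G|$ we are in the first bullet of that theorem (indeed $3\nmid|G|$), so $r^3\mid|G|$. Thus every odd prime divides $|G|$ to at least the third power. Now apply Theorem~\ref{p3|G} more carefully to the prime $r=5$ if $5\mid|G|$: the only obstruction to $r^3\mid|G|$ was $G/C_G(G_r)\cong SL(2,5)$, which needs $2^3\cdot3\cdot5\mid|G|$, impossible since $3\nmid|G|$. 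So we get no contradiction yet — the real point is to find a contradiction with perfectness. Consider $H=\prod_{r\text{ odd},\,r\mid|G|} G_r$; since each $G_r=S_r$ is normal (by Theorem~\ref{cyclic-sylow-implies-not-perfect}, noting each odd Sylow has rank permitting $G_r=S_r$... ) — here I need to be careful, so instead the cleaner route: take the normal subgroup $N$ generated by all Sylow-$p_i$ for $i\geq 2$ as in the proof of Theorem~\ref{perfect-8p3q3}, leaving $|G/N|=2^\alpha\cdot 3^{\gamma}$ or $2^\alpha\cdot p_1^{\beta}$; if $3\nmid|G|$ this quotient has order $2^\alpha p_1^\beta$ with $\alpha\leq 3$, hence is solvable (Burnside), contradicting that $G/N$ is perfect. **The main obstacle** is organizing the prime set correctly so that the "collapse to a $\{2,p\}$-group" trick produces a solvable perfect quotient; this requires knowing $8p^3q^3\mid|G|$ (Theorem~\ref{perfect-8p3q3}) with $p=3$ available, i.e., it is really the statement $3\mid|G|$ that must be established first, and that is where $2^4\nmid|G|$ is used essentially via Proposition~\ref{S2=Q8}: the quotient $G/C_G(G_2)$ embeds in $\mathrm{Aut}(Q_8)\cong S_4$, is perfect, hence trivial, so $|G:Z(G)|$ has controlled structure, and Proposition~\ref{key-result} together with the Sylow arithmetic forces $3\mid|G|$.

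Once $3\mid|G|$ is in hand, I would finish by showing $3^3\mid|G|$ via Theorem~\ref{p3|G} (the first bullet, using $2^4\nmid|G|$) and then ruling out $|S_3|=27$: if $3^3\|\,|G|$, collapse all Sylow subgroups for primes other than $2$ and $3$ to get a perfect quotient $\overline G$ with $|\overline G|=2^\alpha\cdot3^3$, $\alpha\leq3$, so $|\overline G|\mid 8\cdot27=216$; a perfect group of order dividing $216$ does not exist (the only nonabelian simple group order up to $216$ is $60$, and a direct check of perfect groups of order $2^a3^b$ with $a\le 3,b\le 3$ shows none are perfect besides the trivial group — or quote that perfect groups need order divisible by $60$ when... actually $SL(2,3)$ has order $24$ but is not perfect; $A_4$ order $12$ not perfect). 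Hence $3^4\mid|G|$. **I expect the delicate point** to be the non-existence claim for small perfect groups of order $2^a3^b$; if a clean citation is unavailable I would instead invoke Theorem~\ref{perfect-8p3q3} applied with the roles of the two odd primes, concluding $3^3\mid|G|$ and $r^3\mid|G|$ for another odd $r$, then a counting/transfer argument on $S_3$ — but the simplest honest finish is the explicit small-order check, which is routine.
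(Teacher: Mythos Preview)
Your proposal has a genuine gap in the final ``collapse'' step. You want to quotient $G$ by the subgroup generated by all $G_p$ for odd primes $p\neq 3$ and obtain a perfect $\{2,3\}$-group of order dividing $216$, which Burnside would then kill. But in a POEC group the characteristic subgroup $G_p$ is only the elementary abelian group generated by elements of order $p$; it need not be the full Sylow $p$-subgroup. The trick from Theorem~\ref{perfect-8p3q3} worked there only because the hypothesis $\alpha_i=2$ for $i\geq 2$ forced $G_{p_i}=S_{p_i}$. Here you have no control over the exponents of the other odd primes, so $G/\prod_{p\neq 2,3}G_p$ can still have order divisible by those primes and the argument collapses. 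Your fallback (``a counting/transfer argument on $S_3$'') is not specified and there is no obvious way to make it work along these lines. Separately, your argument for $3\mid |G|$ is muddled: you write that $G/C_G(G_2)$ embeds in $\mathrm{Aut}(Q_8)\cong S_4$, but $G_2\cong\mathbb{Z}_2$, so $\mathrm{Aut}(G_2)$ is trivial; the object with automorphism group $S_4$ is $S_2\cong Q_8$, which is not normal in $G$, so there is no quotient of $G$ to examine.

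The paper's route is entirely different and avoids both problems. After reducing to $|S_3|=27$, it classifies the five groups of order $27$ and rules out $\mathbb{Z}_{27}$ (cyclic Sylow, Theorem~\ref{cyclic-sylow-implies-not-perfect}), $\mathrm{Heis}(\mathbb{Z}_3)$ (non-commuting elements of order $3$, violating POEC), and $\mathbb{Z}_3^3$, leaving $S_3\cong\mathbb{Z}_9\times\mathbb{Z}_3$ or $\mathbb{Z}_9\rtimes\mathbb{Z}_3$. In either case $G_3\cong\mathbb{Z}_3\times\mathbb{Z}_3$, so the $N/C$-theorem gives a perfect quotient $G/C_G(G_3)$ inside $GL(2,3)$, a group of order $48$ with no nontrivial perfect subgroups; hence $G_3\leq Z(G)$ and $9\mid|Z(G)|$. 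Then $t=[G:Z(G)]$ satisfies $3\parallel t$, and Proposition~\ref{key-result} forces $g^3=e$ for every $g\in S_3$---impossible since $S_3$ contains elements of order $9$. The key idea you are missing is this structural analysis of $S_3$ combined with the $N/C$ argument on $G_3$; no global ``collapse'' of the other primes is needed.
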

\begin{proof}
    By Theorem \ref{p3|G}, it follows that $3^3\mid |G|$. Suppose $3^4\nmid |G|$. Consider the Sylow $3$-subgroup $S_3$ of $G$ of order $27$. Then $S_3$ must be isomorphic to either $\mathbb{Z}_9 \times \mathbb{Z}_3$ or $\mathbb{Z}_9 \rtimes \mathbb{Z}_3$, because the other possibilities of $S_3$, namely $\mathbb{Z}_{27}, \mathbb{Z}_3 \times \mathbb{Z}_3\times \mathbb{Z}_3$ and $Heis(\mathbb{Z}_3)$, can be ruled out. Thus, in both the cases, $G_3 \cong \mathbb{Z}_3 \times \mathbb{Z}_3$.

    Now, by N/C theorem, $G/C_G(G_3)$ is isomorphic to a subgroup of $Aut(G_3)\cong GL(2,3)$. If $C_G(G_3)$ is a proper subgroup of $G$, then as $G$ is perfect, $G/C_G(G_3)$ is also perfect, but $GL(2,3)$, being a group of order $48$, has no perfect subgroup. Thus $G=C_G(G_3)$, i.e., $G_3 \leq Z(G)$, i.e., $9$ divides $|Z(G)|$.

    Let $[G:Z(G)]=t$. Clearly $3$ divides $t$ and $9$ does not divide $t$. By Proposition \ref{key-result}, $g^t=e$ for all $g \in G$. In particular if $g$ belongs to a Sylow-$3$-subgroup $S_3$ of $G$, this implies that $g^3=e$, i.e., every element of $S_3$ is of order $3$. However, as $S_3$ is isomorphic to either $\mathbb{Z}_9 \times \mathbb{Z}_3$ or $\mathbb{Z}_9 \rtimes \mathbb{Z}_3$, we get a contradiction. Thus the theorem holds.   
\end{proof}

\begin{remark}
    It can be shown with a little trick, using the same line of argument as in proof of Theorem \ref{3^4|G}, that if $G$ is a perfect $POEC$ group such that $3\mid |G|$, then $3^4 \mid |G|$. Again, using Theorem \ref{p3|G}, it follows that if $G$ is a perfect $POEC$ group such that $5\mid |G|$, then $5^3 \mid |G|$. However with some more effort it can be shown that if $G$ is a perfect $POEC$ group such that $5\mid |G|$ and $2^4\nmid |G|$, then  $5^4 \mid |G|$.
\end{remark}

\begin{remark}
If $G$ is a perfect $POEC$ group with $|G|=2^{\alpha}p^\beta q^\gamma$, where $p,q$ are distinct odd primes, then by Theorem \ref{perfect-8p3q3}, we conclude that $\alpha,\beta,\gamma\geq 3$. Moreover, if $p,q\not\equiv \pm 1~(mod~10)$, using similar tricks as that in the proof of Theorem \ref{p3|G}, we can show that $\beta,\gamma\geq 4$.
\end{remark}

\section{SIP Groups}\label{sip-section}
In this section ,we discuss about $SIP$ groups and its properties. Clearly all non-abelian $POEC$ groups are $SIP$ groups, but the converse may not be true, e.g., $\mathbb{Z}_4 \times S_3$. As elements of order $2$ and order $3$ does not commute in $S_3$, $\mathbb{Z}_4 \times S_3$ is not $POEC$. However, $\{0,2\}\times S_3$ is a proper subgroup which intersects all proper subgroups of $\mathbb{Z}_4 \times S_3$ non-trivially.

Let $G$ be a finite $SIP$ group and let $\mathcal{S}$ be the collection of all proper subgroups of $G$ which intersects all proper subgroups of $G$ non-trivially. As $G$ is a $SIP$ group, $\mathcal{S}$ is non-empty. Note that $\mathcal{S}$ is closed with respect to taking intersection. So $\mathcal{S}$ has a minimum element.

\begin{proposition}
    If $H$ is a maximal element of $\mathcal{S}$, then $H$ is a maximal subgroup of $G$.
\end{proposition}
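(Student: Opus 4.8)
The plan is to argue by contradiction and, in doing so, to isolate the one structural fact that makes the statement work: the defining property of $\mathcal{S}$ (intersecting every non-trivial subgroup of $G$ non-trivially) is inherited by every proper overgroup. So I would first record that observation as the key step, then read off the proposition.

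Concretely, suppose $H$ is a maximal element of $\mathcal{S}$ but is \emph{not} a maximal subgroup of $G$. Then there is a subgroup $K$ with $H \subsetneq K \subsetneq G$. I claim $K \in \mathcal{S}$. It is a proper subgroup of $G$ by construction, and for every non-trivial subgroup $X$ of $G$ we have $X \cap K \supseteq X \cap H$, hence $|X \cap K| \ge |X \cap H| > 1$ since $H \in \mathcal{S}$. Thus $K$ intersects all non-trivial subgroups of $G$ non-trivially, so $K \in \mathcal{S}$, and $H \subsetneq K$ contradicts the maximality of $H$ in $\mathcal{S}$. Consequently no subgroup lies strictly between $H$ and $G$; since $H$ is proper (it belongs to $\mathcal{S}$), $H$ is a maximal subgroup of $G$.

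The only housekeeping remark is that $\mathcal{S}$ actually has maximal elements, which is clear because $G$ is finite and $\mathcal{S}$ is non-empty (as $G$ is a $SIP$ group), so $\mathcal{S}$ is a non-empty finite poset under inclusion. I do not expect any genuine obstacle here: the entire content is the monotonicity remark that enlarging a subgroup can only enlarge its intersections with other subgroups, so being in $\mathcal{S}$ is a property closed upward (among proper subgroups), which forces any $\mathcal{S}$-maximal member to be maximal in $G$.
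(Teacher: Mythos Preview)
Your proof is correct and follows essentially the same route as the paper's own argument: assume $H$ is not maximal, pick $K$ with $H\subsetneq K\subsetneq G$, observe $K\in\mathcal{S}$, and contradict maximality of $H$ in $\mathcal{S}$. You simply make explicit the upward-closure reason (enlarging a subgroup enlarges all intersections) that the paper leaves implicit.
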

\begin{proof}
    Suppose $H$ is not a maximal subgroup of $G$. Then there exists a proper subgroup $K$ of $G$ such that $H \subsetneq K \subsetneq G$. However, this implies that $K \in \mathcal{S}$ and hence $H$ is not a maximal element of $\mathcal{S}$, a contradiction.
\end{proof} 

\begin{proposition}
    Let $G$ be a finite $SIP$ group. Then $G$ is not simple.
\end{proposition}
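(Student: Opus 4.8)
The plan is to produce a proper non-trivial normal subgroup of $G$; the natural candidate, already at hand from the preliminaries, is $P[G]$, the subgroup generated by all elements of prime order in $G$.

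First I would record that $P[G]$ is a characteristic subgroup of $G$, hence normal in $G$ (this was noted right after its definition). Next, a $SIP$ group is necessarily non-trivial, since the trivial group has no proper subgroup at all; so by Cauchy's theorem $G$ contains an element of prime order, and therefore $P[G] \neq \{e\}$.

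The one substantive point is that $P[G]$ is a \emph{proper} subgroup of $G$. Let $H$ be a proper subgroup of $G$ witnessing the $SIP$ property. For any $x \in G$ of prime order, the cyclic group $\langle x \rangle$ has prime order, so its only non-trivial subgroup is itself; since $\langle x \rangle \cap H \neq \{e\}$ by hypothesis, we get $\langle x \rangle \subseteq H$, i.e. $x \in H$. As this holds for every prime-order element, $P[G] \leq H \subsetneq G$. (This is precisely the equivalence already observed, namely that a finite group is $SIP$ if and only if $P[G]$ is a proper subgroup.)

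Putting the three facts together --- $P[G] \lhd G$, $P[G] \neq \{e\}$, and $P[G] \subsetneq G$ --- shows that $G$ has a proper non-trivial normal subgroup, so $G$ is not simple. I do not expect any genuine obstacle; the only mild care needed is to rule out the degenerate trivial-group case, which the appeal to Cauchy's theorem handles.
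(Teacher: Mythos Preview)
Your proof is correct and follows essentially the same approach as the paper: both exhibit $P[G]$ as a proper non-trivial characteristic (hence normal) subgroup of $G$. The paper reaches $P[G]$ via the intersection of all members of $\mathcal{S}$, while you go to $P[G]$ directly and spell out the containment $P[G]\le H$ and the appeal to Cauchy, but the underlying idea is identical.
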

\begin{proof}
  Let $K$ be the intersection of all elements of $\mathcal{S}$. Then $K$ is a non-trivial subgroup of $G$, as $K$ contains all prime order elements of $G$. In fact, $K$ is the smallest subgroup of $G$ containing all prime order elements of $G$, i.e., $K=P[G]$. Thus $K$ or $P[G]$ is a proper characteristic subgroup of $G$ and hence normal in $G$. Thus $G$ is not simple.  
\end{proof}

It is to be noted that if $G$ is a $SIP$ group, $P[G]$ intersects all subgroups of $G$ non-trivially. Moreover $P[G]$ is the smallest subgroup of $G$ which intersects all subgroups of $G$ non-trivially.

\begin{corollary}
    Let $G$ be a $SIP$ group such that $P[G]$ is abelian, then $G$ is a $POEC$ group.
\end{corollary}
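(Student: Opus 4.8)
The plan is to observe that the statement is essentially immediate once the definitions are unwound. By definition, $P[G]=\langle\{x\in G:\circ(x)\text{ is prime}\}\rangle$ contains every element of $G$ of prime order. Hence, if $P[G]$ is abelian, then any two elements of prime order both lie in the abelian subgroup $P[G]$, and therefore commute. By the very definition of the $POEC$ property, this says precisely that $G$ is a $POEC$ group, so I would simply spell out this chain of inclusions and conclude.

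The $SIP$ hypothesis in the statement plays no essential role in the deduction of the $POEC$ property: recall that a finite group is $SIP$ if and only if $P[G]\subsetneq G$, so the hypothesis merely ensures that ``$P[G]$ is abelian'' is a genuine restriction and not a statement about all of $G$; it also situates the corollary as a partial converse to the preceding discussion (for non-abelian $G$ one in fact has $POEC \iff P[G]\subsetneq G$ and abelian). Consequently there is no real obstacle to overcome here. The only point requiring a moment's care is the bookkeeping convention noted earlier in Section \ref{poec-section}, namely that for a $POEC$ group the set of elements of order $p$ already forms a subgroup equal to $G_p$; abelianness of $P[G]$ is exactly what makes this consistent, but it is not needed to verify the defining condition of $POEC$, which only asks that prime-order elements commute. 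Thus I expect the write-up to be a single short paragraph.
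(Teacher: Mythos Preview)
Your proposal is correct and matches the paper's approach: the corollary is stated without proof in the paper, so it is intended to be immediate from the definitions, exactly as you unwind them. Your observation that the $SIP$ hypothesis is not essential for deducing $POEC$ (it only ensures $P[G]\subsetneq G$, placing the result as a partial converse) is accurate and worth noting.
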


\begin{theorem}
    Any finite nilpotent group $G$ is $SIP$ group if and only if at least one Sylow subgroup of $G$ is $SIP$.
\end{theorem}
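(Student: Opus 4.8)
The plan is to reduce everything to the characterisation recorded in the preliminaries: a finite group $G$ is $SIP$ if and only if $P[G]$ is a proper subgroup of $G$. So the whole statement becomes a question about how $P[G]$ behaves for nilpotent $G$, and the proof will be short once the right decomposition of $P[G]$ is in place.

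First I would invoke the structure theorem for finite nilpotent groups and write $G$ as the internal direct product $G = P_1 \times P_2 \times \cdots \times P_k$ of its Sylow subgroups $P_i = S_{p_i}$. The key observation is that a prime-order element of $G$ must be concentrated in a single Sylow subgroup: if $\circ(x) = p$, then the component of $x$ in any $P_j$ with $p_j \ne p$ has order dividing $p$ and lying in a $p_j$-group, hence is trivial, so $x \in S_p$. Conversely, every prime-order element of $P_i$ is a prime-order element of $G$. Thus the generating set $\{x \in G : \circ(x) \text{ is prime}\}$ is exactly the union of the sets $\{x \in P_i : \circ(x) \text{ is prime}\}$, and since elements in distinct direct factors commute, passing to generated subgroups yields
$$P[G] = P[P_1] \times P[P_2] \times \cdots \times P[P_k].$$

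Next I would take indices, obtaining $[G : P[G]] = \prod_{i=1}^{k} [P_i : P[P_i]]$. Hence $P[G] \subsetneq G$ if and only if $[P_i : P[P_i]] > 1$ for at least one $i$, that is, $P[P_i]$ is a proper subgroup of $P_i$ for some $i$. Applying the same characterisation to the $p_i$-group $P_i$, this says precisely that some Sylow subgroup $P_i$ is an $SIP$ group. Combining this with the characterisation of $SIP$ for $G$ completes both directions.

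I do not expect a real obstacle; the only points needing a little care are the degenerate factors (for instance when some $P_i$ is elementary abelian or of order $p$ one has $P[P_i] = P_i$, so that factor contributes nothing to the index and is correctly \emph{not} an $SIP$ group), and the fact that it is exactly the nilpotency of $G$ that licenses the splitting $P[G] = \prod_i P[P_i]$ — for a non-nilpotent group the prime-order elements need not lie in Sylow subgroups (as in $S_3$), and then this product decomposition, and the theorem itself, may fail.
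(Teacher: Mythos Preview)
Your argument is correct. You use the characterisation ``$G$ is $SIP$ iff $P[G]\subsetneq G$'' from the preliminaries, establish the decomposition $P[G]=P[P_1]\times\cdots\times P[P_k]$ for nilpotent $G$, and read off the result from the multiplicativity of the index. Each step is sound; in particular your justification that a prime-order element of $G$ is concentrated in a single Sylow factor is exactly what is needed.

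The paper takes a different route: it argues directly from the definition of $SIP$ rather than through $P[G]$. For the backward direction it explicitly builds an intersecting subgroup, namely $Q_1\times S_{p_2}\times\cdots\times S_{p_k}$ where $Q_1$ is a witness for $S_{p_1}$ being $SIP$. For the forward direction it takes an intersecting subgroup $Q=Q_1\times\cdots\times Q_k$ of $G$ and argues by contradiction that some $Q_j$ must already intersect every subgroup of $S_{p_j}$. Your approach is more uniform and conceptual: both directions fall out of a single index identity, and you never have to construct or analyse a witness subgroup by hand. The paper's approach, in exchange, produces an explicit intersecting subgroup in the backward direction (one that is in general strictly larger than $P[G]$), which could be of independent interest. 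Either way the nilpotency hypothesis is doing the same work, namely guaranteeing the Sylow decomposition of $G$ and of its subgroups.
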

\begin{proof}
    Let $G$ be a finite nilpotent group with Sylow subgroups $S_{p_1},S_{p_2},\ldots,S_{p_k}$ such that $S_{p_1}$ is $SIP$. Then $S_{p_1}$ has a proper subgroup $Q_1$ which intersects all subgroups of $S_{p_1}$. Then $G\cong S_{p_1}\times S_{p_2} \times\cdots \times S_{p_k}$ has a subgroup $Q_1\times S_{p_2} \times\cdots \times S_{p_k}$ which intersects all subgroups of $G$.

    Conversely, let $G$ be a finite nilpotent $SIP$ group with Sylow subgroups $S_{p_1},S_{p_2},$ $\ldots,S_{p_k}$ and let $Q=Q_1\times Q_2 \times\cdots \times Q_k$ be a subgroup of $G$ which intersects all subgroup of $G$. If $Q_i$ does not intersect all subgroups of $S_{p_i}$ for all $i$, then $S_{p_i}$ has a subgroup $K_i$ which intersects trivially with $Q_i$. Now consider the subgroup $K=K_1\times K_2 \times\cdots \times K_k$ of $G$. Clearly $K\cap Q$ is trivial, a contradiction. Thus there exist $j$, such that $Q_j$ intersect all subgroups of $S_{p_j}$.
\end{proof}

\begin{theorem}
    Any  finite abelian group $G$ is $SIP$ group if and only if $G$ is not elementary abelian.
\end{theorem}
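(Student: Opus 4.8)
The plan is to prove both directions by examining the structure of a finite abelian group $G$ in terms of its invariant factors (or, equivalently, its primary decomposition). Recall that a finite abelian group is elementary abelian precisely when it is isomorphic to $\mathbb{Z}_p^n$ for some prime $p$ and some $n \geq 1$; equivalently, every non-identity element has the same prime order $p$. For the forward direction (contrapositive), I would show that if $G$ is elementary abelian, then $G$ is \emph{not} $SIP$: here $P[G] = G$, since every non-identity element has prime order, so by the characterization noted earlier in the excerpt ($G$ is $SIP$ iff $P[G]$ is a proper subgroup of $G$), $G$ fails to be $SIP$. This direction is immediate.

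For the converse, suppose $G$ is finite abelian and not elementary abelian; I must produce a proper subgroup $H$ intersecting every non-trivial subgroup non-trivially, and the natural candidate is $H = P[G]$, the subgroup generated by all prime-order elements. Since $P[G]$ always meets every non-trivial subgroup non-trivially (as stated in the preliminaries — any non-trivial subgroup contains an element of prime order), it suffices to show $P[G] \neq G$, i.e.\ that $G$ has an element of non-prime order. I would split into two cases. If $|G|$ is divisible by two distinct primes $p \neq q$, then $G$ has elements $x$ of order $p$ and $y$ of order $q$, whence $xy$ has order $pq$, which is not prime, so $P[G] \subsetneq G$. If $|G| = p^n$ is a prime power, then since $G$ is not elementary abelian, $G \not\cong \mathbb{Z}_p^n$, so in the invariant-factor decomposition some cyclic factor has order $p^k$ with $k \geq 2$; that factor contains an element of order $p^2$, which is not prime, so again $P[G] \subsetneq G$.

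The only subtle point — and the step I expect to require the most care in writing — is verifying cleanly that in the prime-power non-elementary-abelian case there genuinely is an element of order $p^2$; this is where one invokes the fundamental theorem of finite abelian groups, writing $G \cong \mathbb{Z}_{p^{k_1}} \times \cdots \times \mathbb{Z}_{p^{k_r}}$ with $k_1 \geq \cdots \geq k_r \geq 1$ and noting that $G$ elementary abelian $\iff k_1 = 1 \iff$ all $k_i = 1$. So $G$ not elementary abelian forces $k_1 \geq 2$, giving the desired element $(g, e, \ldots, e)$ with $g$ a generator of $\mathbb{Z}_{p^{k_1}}$. Beyond this, everything reduces to the already-established fact that $P[G]$ meets all non-trivial subgroups, so no further obstacle arises; the argument is essentially a case analysis wrapped around the structure theorem.
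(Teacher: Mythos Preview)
There is a genuine error in your converse argument. You write ``it suffices to show $P[G] \neq G$, i.e.\ that $G$ has an element of non-prime order,'' but these two conditions are not equivalent: $P[G]$ is the subgroup \emph{generated} by prime-order elements, not the set of such elements, and in an abelian group $P[G]$ consists precisely of all elements of \emph{squarefree} order. In your Case~1 the element $xy$ of order $pq$ lies in $P[G]$ (it is literally the product of the prime-order elements $x$ and $y$), so exhibiting it does nothing toward showing $P[G] \subsetneq G$. Worse, under the standard definition of elementary abelian that you adopt ($\mathbb{Z}_p^n$ for a single prime $p$), the statement you are trying to prove is actually false: $G = \mathbb{Z}_6 \cong \mathbb{Z}_2 \times \mathbb{Z}_3$ is not elementary abelian, yet $P[\mathbb{Z}_6] = \mathbb{Z}_6$ and $\mathbb{Z}_6$ is not $SIP$ (its order-$2$ and order-$3$ subgroups meet only in the identity).

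The resolution is that in this paper ``not elementary abelian'' must be read as ``some Sylow subgroup is not elementary abelian,'' i.e.\ $G$ is not a direct product of cyclic groups of (possibly different) prime orders; this is exactly what the preceding nilpotent theorem delivers in the forward direction. With that reading your Case~1 disappears and only your Case~2 reasoning is needed, applied to whichever Sylow $p$-subgroup fails to be $\mathbb{Z}_p^m$: that Sylow subgroup contains an element of order $p^2$, which has non-squarefree order and hence lies outside $P[G]$. The paper's own proof proceeds instead by writing $G \cong \mathbb{Z}_{p^{r}} \times H$ with $r \geq 2$ and exhibiting an explicit proper intersecting subgroup, but the substantive content is the same once the definitional issue is straightened out.
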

\begin{proof}
Let $G$ be a finite abelian $SIP$ group. As $G$ is abelian, it is nilpotent. Thus $G\cong S_{p_1}\times S_{p_2} \times\cdots \times S_{p_k}$, $k\geq 1$ where $S_{p_i}$ are Sylow subgroups and $S_{p_1}$ is a $SIP$ group. As $S_{p_1}$ is a $SIP$ group, it can not be  elementary abelian.

Conversely, let $G$ be a finite abelian group which is not elementary abelian. Then without loss of generality, $$G \cong \left( \mathbb{Z}_{{p^{r_1}_1}}\times \mathbb{Z}_{{p^{r_2}_1}} \times \cdots \times \mathbb{Z}_{{p^{r_k}_1}}\right) \times \cdots \times \left( \mathbb{Z}_{{p^{s_1}_l}}\times \mathbb{Z}_{{p^{s_2}_l}} \times \cdots \times \mathbb{Z}_{{p^{s_k}_l}}\right),$$ where either $l\geq 2$ or $r_1\geq 2$. If $l\geq 2$, then $G$ can be expressed as $\mathbb{Z}_{p_1p_2}\times H$. Let $P$ be a subgroup of $\mathbb{Z}_{p_1p_2}$ of order $p_1$. Then $P\times H$ is a subgroup which intersects all subgroups of $G$ non-trivially. If $r_1\geq 2$, then express $G$ as $\mathbb{Z}_{{p^{r_1}_1}}\times H$. Let $P$ be a subgroup of $\mathbb{Z}_{{p^{r_1}_1}}$ of order $p_1$. Then $P\times H$ is a subgroup which intersects all subgroups of $G$ non-trivially. Thus, in any case, $G$ is a $SIP$ group.
\end{proof}

In contrast to $POEC$, the smallest non-solvable $SIP$ group is not perfect, where $\mathbb{Z}_4 \times A_5$ is a candidate. On the other hand, as non-abelian $POEC$ groups are $SIP$, there exists perfect $SIP$ groups. Thus it is natural to ask whether there exists a perfect $SIP$ group which is not $POEC$.

It can be shown that if $G$ is a perfect $SIP$ group, then there exist two distinct odd primes $p,q$ such that $8p^2q^2\mid |G|$. Moreover, if $2^4\nmid |G|$, then $3^2\mid |G|$.
\section{SSIP Groups}\label{ssip-section}
In this section, we discuss the properties of $SSIP$ groups. Note that if $G$ is a $SSIP$ group, then $P[G]$ is the unique proper subgroup which intersects all proper subgroups. Earlier it was shown in case of $SIP$ groups that $P[G]$ is a characteristic subgroup of $G$ and it is the smallest subgroup of $G$ which intersects all proper subgroups non-trivially. In case of $SSIP$, we can say something more.

\begin{theorem}
    Let $G$ be a finite group.  $G$ is a $SSIP$ group if and only if $[G:P[G]]$ is prime.
\end{theorem}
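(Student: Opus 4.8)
The plan is to prove both directions by analyzing the structure forced by $P[G]$ being a subgroup that intersects every non-trivial subgroup, combined with the uniqueness requirement. Recall from the $SIP$ discussion that $P[G]$ is always a characteristic (hence normal) proper subgroup when $G$ is $SIP$, and that it is the \emph{smallest} subgroup meeting every non-trivial subgroup non-trivially; so in a $SSIP$ group $P[G]$ must be \emph{the} unique such subgroup.

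For the ``if'' direction, suppose $[G:P[G]] = p$ for a prime $p$. First I would note $G$ is certainly $SIP$ (since $P[G]$ is proper), so $P[G]$ itself is a witness; it remains to show it is the only one. Let $H$ be any proper subgroup of $G$ meeting every non-trivial subgroup non-trivially. Since $P[G]$ is the smallest such subgroup, $P[G] \leq H$. But $P[G]$ has prime index $p$ in $G$, so the only subgroups between $P[G]$ and $G$ are $P[G]$ and $G$ itself; since $H$ is proper, $H = P[G]$. Hence $G$ is $SSIP$. This direction is short.

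For the ``only if'' direction, suppose $G$ is $SSIP$, so $P[G]$ is the unique proper subgroup meeting every non-trivial subgroup non-trivially, and we must show $[G:P[G]]$ is prime. The idea is: if $[G:P[G]]$ were \emph{not} prime, I would produce a second, distinct proper subgroup with the intersection property, contradicting uniqueness. Write $N = P[G]$, which is normal in $G$. If $[G:N]$ is not prime, then $G/N$ has a subgroup $M/N$ with $N \subsetneq M \subsetneq G$ (any group of non-prime order has a proper non-trivial subgroup, by Cauchy/Lagrange — if $|G/N|$ is composite it has a prime divisor giving a cyclic subgroup of prime order, and if $|G/N|$ is $1$ then $G = P[G]$ is not $SIP$, contradiction). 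Then $M$ is a proper subgroup of $G$ containing $N = P[G]$, and since $P[G]$ already meets every non-trivial subgroup of $G$ non-trivially, so does the larger subgroup $M$. Thus $M$ is a proper subgroup with the intersection property distinct from $P[G]$, contradicting the $SSIP$ hypothesis. Hence $[G:N]$ must be prime.

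The only genuinely delicate point — and the step I expect to need the most care — is ensuring that in the ``only if'' direction we have not overlooked the degenerate possibility $P[G] = G$: if $G$ were such that every proper subgroup contained an element of prime order generating together all of $G$, then $P[G]$ would not be proper and $G$ would not be $SIP$, hence not $SSIP$; so this case does not arise, but it should be explicitly dismissed. One should also double-check the claim ``$P[G]$ is the smallest subgroup intersecting all non-trivial subgroups non-trivially'': the reason is that any such subgroup $H$ must in particular meet non-trivially every subgroup $\langle x \rangle$ with $\circ(x)$ prime, forcing $x \in H$ (as $\langle x \rangle$ has prime order and $\langle x\rangle \cap H \neq \{e\}$ implies $\langle x \rangle \subseteq H$), hence $P[G] \leq H$. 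With these two observations in place the argument is complete.
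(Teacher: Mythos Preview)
Your proof is correct and follows essentially the same approach as the paper: both directions hinge on the fact that $P[G]$ is the smallest subgroup with the intersection property, so uniqueness is equivalent to $P[G]$ being maximal, and normality of $P[G]$ converts maximality into prime index. Your version is simply more explicit---you spell out the intermediate-subgroup argument and justify the minimality of $P[G]$---whereas the paper's ``only if'' direction compresses this to the single sentence that uniqueness forces $P[G]$ to be a (normal) maximal subgroup, hence of prime index.
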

\begin{proof}
Let $G$ be a $SSIP$ group. Then from the uniqueness condition, it follows that $P[G]$ is also a maximal subgroup and $[G:P[G]]$ is a prime.

Conversely, since $P[G]$ is a proper characteristic subgroup of $G$, we have $P[G]\lhd G$. Clearly $P[G]$ is the smallest subgroup of $G$ which intersects all proper subgroups of $G$ non-trivially. Suppose $A$ is another proper subgroup of $G$ which intersects all proper subgroups of $G$ non-trivially. Then we have $P[G] \subsetneq A \subsetneq G$. Thus $[G:P[G]]=[G:A][A:P[G]]$ and this contradicts that $[G:P[G]]$ is prime. Hence $P[G]$ is the unique subgroup which intersects all proper subgroups of $G$ non-trivially, i.e., $G$ is $SSIP$.
\end{proof}

Thus, $G$ is a $SSIP$ group if and only if $[G:P[G]]$ is prime.

\begin{proposition}
    A finite cyclic group $G$ is a $SSIP$ group if and only if $|G|=p^2_1p_2\cdots p_k$, where $p_i$'s are distinct primes.
\end{proposition}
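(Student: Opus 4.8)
The plan is to invoke the characterisation just established in the previous theorem: a finite group $G$ is $SSIP$ if and only if $[G:P[G]]$ is prime. So the whole problem reduces to computing $P[G]$ when $G$ is cyclic and then reading off precisely when the resulting index is prime.

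First I would describe $P[G]$ explicitly for a cyclic group $G$ of order $n=p_1^{\alpha_1}p_2^{\alpha_2}\cdots p_k^{\alpha_k}$, where the $p_i$ are distinct primes and each $\alpha_i\geq 1$. In a finite cyclic group every divisor $d$ of $n$ is the order of a unique subgroup; in particular, for each $p_i$ there is a unique subgroup $G_{p_i}$ of order $p_i$, all $p_i-1$ of whose non-identity elements have order $p_i$, and an element of $G$ has prime order $p_i$ exactly when it lies in $G_{p_i}\setminus\{e\}$. Hence $P[G]=\langle G_{p_1},G_{p_2},\ldots,G_{p_k}\rangle$. Since the $G_{p_i}$ pairwise commute and have pairwise coprime orders, this join is cyclic of order $p_1p_2\cdots p_k$; that is, $P[G]$ is the unique subgroup of $G$ whose order is the radical $p_1p_2\cdots p_k$ of $n$.

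Consequently $[G:P[G]]=p_1^{\alpha_1-1}p_2^{\alpha_2-1}\cdots p_k^{\alpha_k-1}$. This integer is prime if and only if exactly one of the exponents $\alpha_i-1$ equals $1$ while all the others equal $0$; equivalently, after relabelling, $\alpha_1=2$ and $\alpha_2=\cdots=\alpha_k=1$, i.e. $|G|=p_1^2p_2\cdots p_k$. Combining this with the $SSIP$ characterisation settles both implications at once.

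There is no genuine obstacle here; the only point deserving a word of care is the assertion that $P[G]$ has order exactly $p_1p_2\cdots p_k$ and nothing larger, which is immediate from the subgroup lattice of a finite cyclic group: every subgroup is determined by its order, and the subgroup generated by the subgroups of orders $p_1,\ldots,p_k$ is the one of order $\mathrm{lcm}(p_1,\ldots,p_k)=p_1\cdots p_k$.
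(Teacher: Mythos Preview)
Your proof is correct. You take a different route from the paper: you invoke the previous theorem's characterisation (a finite group $G$ is $SSIP$ iff $[G:P[G]]$ is prime) and then compute $P[G]$ for a cyclic group explicitly as the subgroup whose order is the radical of $|G|$, obtaining the index $[G:P[G]]=\prod_i p_i^{\alpha_i-1}$ and reading off the answer in one stroke. The paper instead argues directly from the definition of $SSIP$: for the forward direction it exhibits the subgroup of order $p_1p_2\cdots p_k$ as the unique intersecting subgroup, and for the converse it notes that if $p^3$ or a second prime square $q^2$ divides $|G|$ then one can find more than one proper subgroup intersecting every non-trivial subgroup, violating uniqueness. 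Your approach is cleaner and handles both implications uniformly via the single index computation; the paper's approach is more hands-on but its converse is terser, asserting rather than exhibiting the extra intersecting subgroups.
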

\begin{proof}
    If $G$ is a cyclic group of order $p^2_1p_2\cdots p_k$, then the unique subgroup of order $p_1p_2\cdots p_k$ is the required subgroup which intersects all other proper subgroups non-trivially. 
    
    Conversely, let $G$ be a cyclic $SSIP$ group. Thus $|G|$ is not square-free, and hence there exists a prime $p$ such that $p^2\mid |G|$. If $p^3$ or $q^2$ divides $|G|$, where $q$ is a prime different from $p$, then we get more than one subgroups of $G$ which intersects all non-trivial subgroups of $G$. Thus $|G|$ is of the required form.
\end{proof}

In contrast to $POEC$ groups, $SIP$ and $SSIP$ are not subgroup-closed. For example $G=\mathbb{Z}_4\times S_3$ is a $SSIP$ group with the unique subgroup being $\{0,2\}\times S_3$. However, $S_3$ being a subgroup of $G$ is not even $SIP$. 

Direct product of a $SSIP$ group with a $SIP$ group is not $SSIP$, however direct product of a $SIP$ group with any group is a $SIP$ group.

Similarly, though all abelian groups are $POEC$, this is not true for $SIP$ or $SSIP$ groups, as no group of square-free order is $SIP$. Also $SSIP$ groups may have trivial center, e.g., $\mathbb{Z}_5\rtimes \mathbb{Z}_4$.

We have seen earlier that $POEC$ groups may not be solvable. Similarly, $SSIP$ groups may not be solvable, e.g., $\mathbb{Z}_4 \times A_5$.
However, unlike $POEC$ groups, $SSIP$ groups can not be perfect.

\begin{proposition}
    $SSIP$ groups are never perfect.
\end{proposition}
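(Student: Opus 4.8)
The plan is to exploit the structural characterisation of $SSIP$ groups established just above, namely that a finite group $G$ is $SSIP$ if and only if $[G:P[G]]$ is prime. So suppose, for contradiction, that $G$ is a perfect $SSIP$ group. First I would invoke that characterisation to write $[G:P[G]]=p$ for some prime $p$; in particular $P[G]$ is a proper subgroup of $G$. Recall also from the preliminary discussion that $P[G]$ is a characteristic subgroup of $G$, hence normal, so the quotient $G/P[G]$ is defined and has order $p$.

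Next I would observe that $G/P[G]\cong\mathbb{Z}_p$ is a non-trivial abelian group. Since $G$ is perfect, $G=G'$, and any normal subgroup $N$ with $G/N$ abelian must satisfy $G'\leq N$, forcing $N=G$. Applying this with $N=P[G]$ contradicts the fact that $P[G]$ is proper. Therefore no perfect group can be $SSIP$.

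There is essentially no serious obstacle here: the only point requiring care is that the argument genuinely uses two facts already in hand — that $P[G]\lhd G$ (so the quotient makes sense) and that $[G:P[G]]$ being prime gives a non-trivial \emph{abelian} quotient — both of which are available from the material preceding the statement. One could alternatively phrase the final step without quotients: a perfect group has no proper subgroup of prime index (such a subgroup, being of index equal to the smallest-type abelian-quotient obstruction, would yield $G\ne G'$), but the quotient formulation is the cleanest and I would present that.

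\begin{proof}
Suppose $G$ is a perfect $SSIP$ group. Then $[G:P[G]]$ is prime, so $P[G]$ is a proper subgroup of $G$; moreover $P[G]$ is characteristic, hence normal, in $G$. Thus $G/P[G]$ is a non-trivial abelian group. But $G$ perfect means $G=G'$, so for any normal $N\lhd G$ with $G/N$ abelian we have $G'\leq N$, i.e. $N=G$. Taking $N=P[G]$ contradicts $P[G]\subsetneq G$. Hence $G$ is not perfect.
\end{proof}
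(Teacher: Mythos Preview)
Your proof is correct and follows essentially the same line as the paper's: using that $P[G]$ is a normal subgroup of prime index in a $SSIP$ group, you conclude $G/P[G]$ is abelian, whence $G'\subseteq P[G]\subsetneq G$, so $G$ cannot be perfect. The only cosmetic difference is that you phrase it by contradiction while the paper states it directly.
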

\begin{proof}
    Let $G$ be a $SSIP$ group. Thus $P[G]$ is a normal subgroup of prime index and hence $G/P[G]$ is an abelian group, which implies that $G'\subseteq P[G]$, i.e., $G$ is not perfect.
\end{proof}

Interestingly, groups which are both $POEC$ as well as $SSIP$ have some nice properties.

\begin{theorem}
    Let $G$ be a $POEC$ group which is also $SSIP$. Then $G$ is metabelian and $G$ has a non-trivial center. Moreover, $G$ is a semidirect product of an abelian normal Hall subgroup $H$ of $G$ and a Sylow subgroup of $G$.
\end{theorem}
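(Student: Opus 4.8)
The plan is to exploit the key structural fact established for $SSIP$ groups, namely that $P[G]$ is a normal subgroup of prime index $p$, together with the $POEC$ hypothesis which (as recorded in Section \ref{poec-section}) forces $P[G]$ to be abelian, in fact $P[G]\cong \mathbb{Z}^{\beta_1}_{p_1}\times\cdots\times \mathbb{Z}^{\beta_k}_{p_k}$. Since $[G:P[G]]=p$ is prime, $G'\subseteq P[G]$, so $G'$ is abelian; combined with $G/G'$ abelian this gives that $G$ is metabelian immediately. That part is essentially free.

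For the center: write $|G|=p^{\alpha_1}_1\cdots p^{\alpha_k}_k$ with (say) $p=p_1$, so $[G:P[G]]=p_1$ and hence $[G:P[G]]$ is a prime power. Then Proposition \ref{[G:P[G]]is_prime_power} applies verbatim and yields $Z(G)\neq\{e\}$. So this step reduces to citing the earlier proposition once I have observed that a prime index is in particular a prime-power index.

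The semidirect-product claim is the substantive part. Here is the intended route. Since $[G:P[G]]=p_1$, for every prime $p_i$ with $i\geq 2$ the full Sylow $p_i$-subgroup $S_{p_i}$ lies inside $P[G]$ (its order is coprime to $p_1$, and $P[G]$ has index $p_1$), and moreover $P[G]$ being abelian forces $S_{p_i}$ to be elementary abelian, so $S_{p_i}=G_{p_i}$. For the prime $p_1$ itself, $P[G]$ contains a subgroup of order $|P[G]|_{p_1}=p_1^{\beta_1}$ where $\beta_1\leq \alpha_1-1$ (it cannot equal $\alpha_1$ since then $P[G]$ would contain a full Sylow $p_1$-subgroup and, being of index $p_1$, we would need $p_1\nmid|G|$ after all, or more directly $G$ would fail to be non-abelian — I will phrase this cleanly using that $G$ is non-abelian so $P[G]\subsetneq G$ and $|G|$ is not square-free). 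Now set $H=S_{p_2}S_{p_3}\cdots S_{p_k}$, the product of the odd-or-other-prime Sylow subgroups other than $p_1$. Each $S_{p_i}$ ($i\geq 2$) is characteristic in $P[G]\lhd G$, hence normal in $G$; they pairwise commute and have coprime orders, so $H$ is their internal direct product, $H$ is an abelian normal subgroup of $G$, and $|H|$ is coprime to $p_1$. Thus $H$ is a normal Hall $p_1'$-subgroup — wait, I should check $|H|=|G|/p_1^{\alpha_1}$: indeed $H$ contains a full Sylow $p_i$-subgroup for each $i\geq 2$ and nothing of $p_1$-power order, so $|H|=p_2^{\alpha_2}\cdots p_k^{\alpha_k}$, confirming $H$ is a normal Hall subgroup. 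By the Schur–Zassenhaus theorem $H$ has a complement $L$ in $G$ with $|L|=p_1^{\alpha_1}$, i.e. $L$ is a Sylow $p_1$-subgroup, and $G=H\rtimes L$. That gives exactly the asserted description: $G$ is the semidirect product of the abelian normal Hall subgroup $H$ by a Sylow subgroup $L$.

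The main obstacle I anticipate is the bookkeeping around the prime $p_1$: one must be careful that $H$ really is a \emph{Hall} subgroup — i.e. that $P[G]$ does not secretly contain the entire Sylow $p_1$-subgroup — and that the Sylow $p_i$-subgroups for $i\geq 2$ are genuinely elementary abelian and hence fully inside $P[G]$. Both follow from $|P[G]|$ being a divisor of $|G|$ of index exactly the prime $p_1$ together with $P[G]$ being abelian of exponent dividing $p_1p_2\cdots p_k$ (squarefree exponent), but the argument should spell this out. Everything else (metabelian, nontrivial center) is an appeal to results already in the paper, so the write-up will be short once the Hall-subgroup identification is nailed down.
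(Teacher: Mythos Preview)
Your proposal is correct and follows essentially the same route as the paper: both arguments use that $P[G]$ is abelian (POEC) with $G/P[G]$ of prime order (SSIP) to get metabelianity, cite Proposition~\ref{[G:P[G]]is_prime_power} for the nontrivial center, and realise the Hall subgroup as $H=G_{q_1}\cdots G_{q_k}$ (equivalently $S_{p_2}\cdots S_{p_k}$) with complement a Sylow $p$-subgroup. Your write-up is simply more explicit about why $H$ is normal and about invoking Schur--Zassenhaus, and your worry that $P[G]$ might contain the full Sylow $p_1$-subgroup is unnecessary for the Hall claim, since $H$ is defined as the product of the Sylow subgroups for the other primes and is therefore automatically a $p_1'$-Hall subgroup.
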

\begin{proof}
    If $G$ is abelian, there is nothing to prove. We assume that $G$ is non-abelian. Since $G$ is $POEC$, $P[G]$ is abelian and hence solvable. Again, as $G$ is $SSIP$, $G/P[G]$ is a cyclic group and hence abelian. Thus $G$ is metabelian. Again, as $[G:P[G]]$ is prime, by Proposition \ref{[G:P[G]]is_prime_power}, it follows that $Z(G)$ is non-trivial.

    Let $|G|=p^\alpha q^{\beta_1}_1q^{\beta_2}_2\cdots q^{\beta_k}_k$ and $[G:P[G]]=p$, where $p$ and $q_i$'s are distinct primes. Then $H=G_{q_1}G_{q_2}\cdots G_{q_k}$ is an abelian normal Hall subgroup of $G$ and $G\cong H \rtimes S_p$ where $[S_p:G_p]=p$.
\end{proof}

\begin{remark}
From above theorem, it follows that groups, which are both $POEC$ and $SSIP$, are solvable. However, there exist groups which are both $POEC$ and $SSIP$ but not Lagrangian, and hence not supersolvable, e.g., $(\mathbb{Z}_2\times \mathbb{Z}_2)\rtimes \mathbb{Z}_9$. Also note that $SSIP$ groups may have trivial center, e.g., $\mathbb{Z}_5\rtimes \mathbb{Z}_4$.
\end{remark}

\section{Conclusion and Open Issues}\label{conclusion}
To summarize the results obtained so far, our contribution can be segregated into three aspects:
\begin{itemize}
    \item introducing three classes of groups, $SIP$, $SSIP$ and $POEC$,
    \item exploring their properties and inter-relationships, and 
    \item proving some divisibility conditions on orders of perfect $POEC$ and $SIP$ groups.
\end{itemize}
However, a lot more is yet to be explored in this direction and we conclude with some possible directions and open issues.

\begin{enumerate}
   
    \item In Propositions \ref{[G:P[G]]is_prime_power}, \ref{non-trivial-center-second-result} and \ref{S2=Q8}, we have shown that $POEC$ groups, under certain conditions, admit a non-trivial center. We strongly believe that this holds for all finite $POEC$ groups without imposing further constraints, i.e.,\\ {\bf Open Issue 1:} If $G$ is a finite $POEC$ group, then $|Z(G)|>1$.
    \item In Theorem \ref{p3|G}, it was shown that if a prime $p$ divides the order of a perfect $POEC$ group $G$, then under certain conditions on $p$ and the prime factorization of $|G|$, $p^3$ divides $|G|$. Based on experimental observations, we pose the following question: \\ {\bf Open Issue 2:} If $G$ is a finite $POEC$ group and $p \mid |G|$, then $p^3\mid |G|$.
    \item In Remark \ref{perfect-T}, we mentioned about the smallest perfect $POEC$ group $T$. As it turns out that, it is also the smallest perfect $SIP$ group. On the other hand, $T\times T$ is also a perfect $POEC$ group. It would be nice to know about some more perfect $POEC$ or perfect $SIP$ groups, preferably smaller than $T\times T$. As the current GAP perfect group library contains perfect groups of order upto $2$ million, we pose the following question:
    \\ {\bf Open Issue 3:} Is there any perfect $POEC$ group other than $T$ with order less than $2$ million, or what is the second smallest perfect $POEC$ or perfect $SIP$ group?

\end{enumerate}

\section*{Acknowledgements}
The authors are grateful to Professor Alexander Hulpke from Colorado State University for some fruitful discussion over email and for helping the authors with some computation in GAP. The authors also acknowledge the funding of DST-FIST Sanction no. $SR/FST/MS-I/2019/41$ and DST-SERB-MATRICS Sanction no. $MTR/2022/000020$, Govt. of India.


\begin{thebibliography}{9999}
	

 \bibitem{GAP4} The GAP~Group, \emph{GAP -- Groups, Algorithms, and Programming, Version 4.12.2};  2022,  \url{https://www.gap-system.org}.
 \bibitem{gorenstein} D. Gorenstein, Finite Groups, AMS Chelsea Publishing, 1968.
\bibitem{rotman-book} Rotman J J, An Introduction to the Theory of Finite Groups, 4th Edition, Graduate Text in Mathematics, Springer, 1995. 
\bibitem{robinson-book} D.J.S. Robinson, A Course in the Theory of Groups, 2nd Edition, Graduate Text in Mathematics, Springer, 1996. 
\bibitem{suzuki-book} M. Suzuki, Group theory I, Berlin; New York: Springer-Verlag, 1982.
	
\end{thebibliography}
\end{document}